\documentclass[10pt]{amsart}

\usepackage[T1]{fontenc}
\usepackage[utf8]{inputenc}
\usepackage{lmodern}

\usepackage{amsmath,amssymb,amsthm,mathtools}
\usepackage{mathrsfs}

\usepackage{microtype}
\usepackage{enumitem}
\usepackage{tikz-cd}

\usepackage{xcolor}
\usepackage[
colorlinks=true,
linkcolor=red,
citecolor=blue,
urlcolor=blue
]{hyperref}

\numberwithin{equation}{section}
\theoremstyle{plain}
\newtheorem{theorem}{Theorem}[section]
\newtheorem{lemma}[theorem]{Lemma}
\newtheorem{corollary}[theorem]{Corollary}
\newtheorem{proposition}[theorem]{Proposition}
\theoremstyle{definition}
\newtheorem{definition}[theorem]{Definition}
\theoremstyle{remark}
\newtheorem{remark}[theorem]{Remark}

\newcommand{\mb}{\mathfrak{b}}

\newcommand{\vphi}{\varphi}
\newcommand{\mE}{\mathcal{E}}

\newcommand{\ps}{\preceq_{\mathrm{S}}}
\newcommand{\isd}{\preceq_{\mathrm{ISD}}}
\newcommand{\pd}{\preceq_{\mathrm{D}}}

\newcommand{\dd}{\Delta_{\mathrm{D}}}

\DeclareMathOperator{\C}{C*}

\title[Choquet-Type State-Space Relations]
{Choquet-Type Relations and a State Space Level Amendment of Arveson's Hyperrigidity Conjecture}

\author{Hridoyananda Saikia}
\address{Department of Mathematics, University of Haifa, Haifa, Israel, 3498838}
\email{hridoyanandasaikia@gmail.com}

\date{\today}

\begin{document}
	\maketitle

	\begin{abstract}
		Davidson and Kennedy introduced the dilation order in connection with commutative $\C$-algebras and classical Choquet theory. Its noncommutative counterpart continues to detect the unique extension property of GNS representations. Motivated by the failure of Arveson's hyperrigidity conjecture and the amended theorem of Clouâtre and Thompson, we develop a state-space approach to this rigidity phenomenon. First, we introduce the strong dilation relation on the state space of a $\C$-algebra and characterize the unique tight extension property through maximality of states. Second, we define the integral subdivision relation and prove that maximality of all pure states in the dilation order implies maximality of every state in the integral subdivision relation. This provides a state space-level amendment of Arveson's hyperrigidity conjecture and yields an alternative proof of the Clouâtre-Thompson theorem. Finally, we show that the strong dilation and integral subdivision relations coincide in the commutative setting and they both agree with the abstract Choquet order associated with a function system.
	\end{abstract}

	\section{Introduction}\label{sec:introduction}
	
	Arveson's boundary representations were introduced as a noncommutative analogue of the Choquet boundary: points of a compact space are replaced by irreducible representations of a \(\C\)-algebra \cite{Arveson_2,Arveson_1, dritschel2005boundary, MR3430455}. If \(S\) is an operator system in a unital \(\C\)-algebra \(A\), meaning a unital self-adjoint linear subspace of \(A\), and if \(\C(S)=A\), then a representation \(\pi:A\to B(H)\) is said to have the unique extension property with respect to \(S\) if the only unital completely positive (UCP) map $ \Phi:A\to B(H)$ satisfying $\Phi_{|S}= \pi_{|S}$ is \(\Phi=\pi\). A boundary representation for \(S\) is an irreducible
	representation of \(A\) with this property. Boundary representations have since become one of the central tools for understanding how much of the ambient
	\(\C\)-algebra is remembered by the generating operator system.
	
	One of the main problems in this direction was Arveson's hyperrigidity conjecture \cite{Arveson_Boundary_2}. The conjecture predicted that if all irreducible representations of \(A=\C(S)\) are boundary representations for \(S\), then
	every representation of \(A\) should have the unique extension property with
	respect to \(S\). This problem led to a substantial body of work connecting
	operator systems, dilation theory, noncommutative Choquet theory, approximation
	properties, and boundary phenomena
	\cite{Kleski,essential_normality,UnperforatedPairs,NoncommutativePeaking,
		Hartz_raphael,Hyperrigidity_tensor,Hyperrigidity_correspondence,
		Full_CuntZ_DorOn,Guy_hyperrigid_subset,CrossedProdOp,hyperrigidityforfunctions,
		GelfandTrasformBoundary,boundaryprojection,pietrzycki2024hyperrigidity,
		hyperrigiditySpectrohedra,hyperrigidityConvex,RaphaelObstruct}. In the
	commutative case, Davidson and Kennedy gave a particularly useful formulation in
	terms of orders on spaces of measures, including the Choquet order, the dilation
	order, and the strong dilation order \cite{hyperrigidityforfunctions}.

	Davidson and Kennedy introduced the dilation order on the state space of a commutative $\C$-algebra in connection with function systems and classical Choquet theory \cite{hyperrigidityforfunctions}. This relation admits a natural extension to the noncommutative setting \cite{davidson2025noncommutative,saikia2025rigidity}. More precisely, if $S\subset A$ is an operator system with $\C(S)=A$, then the pair $(S,A)$ determines a partial order $\preceq_{\mathrm D}$ on the state space $ \mE(A)$, called the \emph{dilation order}. This order provides a useful state-space formulation of the unique extension property: a state $\varphi\in\mathcal E(A)$ is maximal for $\preceq_{\mathrm D}$ if and only if its GNS representation $\pi_\varphi$ has the unique extension property with respect to $S$. Under the GNS correspondence, pure states are precisely those states whose GNS representations are irreducible. Consequently, the hypothesis that every irreducible representation of $A$ is a boundary representation for $S$ is equivalent to the assertion that every pure state of $A$ is maximal in the dilation order. The state-theoretic reformulation of Arveson's hyperrigidity conjecture is therefore the following. \par
	\medskip
	\noindent
	\textbf{State theoretic reformulation of Arveson's hyperrigidity conjecture.} \emph{If every pure state is maximal in $\pd$, then every state is maximal in $\pd$.}
	\medskip
	
	The original conjecture is now known to be false. Bilich and Dor-On constructed a counterexample \cite{CounterAdamBilich}, Scherer later produced a finite-dimensional counterexample \cite{Counter_finite}, and Clou\^atre subsequently developed a new obstruction yielding further counterexamples \cite{RaphaelObstruct}. From the state space point of view, these examples show that the passage from pure state maximality to arbitrary-state maximality is too strong for the dilation order $\pd$. After the failure of the original conjecture, the natural problem is therefore to identify the correct form of rigidity encoded by boundary representations, and to understand it at the level of the state space.
	
	Clouâtre and Thompson proposed such a correction through the unique tight
	extension property \cite{clouatre2024rigidity}. Instead of allowing completely positive extensions to take values in \(B(H)\), one requires them to take values in the von Neumann algebra generated by the representation. Thus a representation \(\pi:A\to B(H)\) has the unique tight extension property with respect to \(S\) if the only unital completely positive map $\Phi:A\to \pi(A)''$ satisfying \(\Phi_{|S}=\pi_{|S}\)  is \(\Phi=\pi\). Clouâtre and
	Thompson proved the amended hyperrigidity theorem: all irreducible
	representations are boundary representations if and only if every representation has the unique tight extension property \cite[Theorem~2.4]{clouatre2024rigidity}. The idea of restricting the range of a UCP extension to a generated von Neumann algebra had already appeared in \cite{Kleski}.
	
	The first aim of this paper is to give a state-space explanation of this tight phenomenon. Motivated by the strong dilation order for measures studied by Davidson and Kennedy \cite{hyperrigidityforfunctions}, we define the \emph{strong dilation relation} \(\preceq_{\mathrm S}\) on \(\mE(A)\). Our first result identifies this relation as the state-space shadow of the unique tight extension property. This is established in Theorem \ref{T:UTEPmaximality}, our first main result of this paper. \par
	
	\medskip
	\noindent
	\textbf{Theorem A.}
	\emph{Let $S$ be an operator system in a unital separable $\C$-algebra $A$ such that $\C(S)=A$ and let $\vphi \in \mE(A)$ with the GNS representation $(\pi_{\vphi}, H_{\vphi}, \xi_{\vphi})$. Then $\pi_{\vphi}$ has the unique tight extension property if and only if $\vphi$ is maximal in the strong dilation relation.} 
	
	\medskip
	
	In view of Theorem~A, the Clou\^atre--Thompson amended hyperrigidity theorem admits the following equivalent state-space formulation: if every pure state of $A$ is maximal in the dilation order, then every state of $A$ is maximal in the strong dilation relation. We prove this implication as a consequence of a stronger maximality principle. To this end, we introduce the \emph{integral subdivision relation}, a new relation on $\mathcal E(A)$ determined by the pair $(S,A)$. We then show that it lies between the strong dilation relation and the ordinary dilation order in Theorem \ref{T:implication}:\par
	\medskip
	\noindent
	\textbf{Theorem B.} $\,\,\,\,\,\varphi\preceq_{\mathrm S}\psi
	\quad\Longrightarrow\quad
	\varphi\preceq_{\mathrm{ISD}}\psi
	\quad\Longrightarrow\quad
	\varphi\preceq_{\mathrm D}\psi.$ \par
	\medskip
	
	The definition of $\preceq_{\mathrm{ISD}}$ is designed to interact with
	barycentric decompositions of states. This allows us to establish the
	following maximality principle: if every pure state is maximal in the
	dilation order, then every state is maximal in the integral subdivision
	relation; see Proposition \ref{P:maximalitypassage}. Since maximality of a state for $\isd$ implies maximality in $\ps$, Proposition \ref{P:maximalitypassage} strengthens the state space formulation of the
	Clouâtre--Thompson theorem. Together with Theorem~A, it also yields an
	alternative proof of their amended hyperrigidity theorem (see Corollary \ref{C:CTamended}).

	The third aim of the paper is to show that the integral subdivision relation is the correct noncommutative analogue of the classical Choquet order. Let \(X\) be a compact metrizable space and let \(S\subset C(X)\) be a function system which separates the points of \(X\). The function system determines a cone \(\mathcal C_S\subset C(X)_{\mathrm{sa}}\), namely the uniformly closed max-stable cone generated by \(S_{\mathrm{sa}}\). This cone is equivalently the collection of pullbacks of continuous convex functions on the state space \(\mE(S)\) of $S$, along the evaluation map $\iota:X\to \mE(S)$.
	It therefore defines an \emph{abstract} Choquet order on Borel probability measures on \(X\) \cite[I.5]{alfsen2012compact}:
	\[
	\mu\preceq_{\mathrm C}\nu \quad\Longleftrightarrow\quad \int_X f\,d\mu\leq \int_X f\, d\nu \quad\text{for all } f\in \mathcal C_S.
	\]
	Combining the Choquet--Cartier disintegration theorem with our state-space
	relations gives the following characterization.
	
	\medskip
	\noindent
	\textbf{Theorem C.}
	\emph{Let \(X\) be a compact metrizable space, and let \(S\subset C(X)\) be a
		function system which separates the points of \(X\). For Borel probability
		measures \(\mu,\nu\) on \(X\), the following are equivalent:
		\[
		\mu\preceq_{\mathrm C}\nu,\qquad
		\mu\preceq_{\mathrm S}\nu,\qquad
		\mu\preceq_{\mathrm{ISD}}\nu.
		\]}
	
	\medskip

	The paper is organized as follows. Section~\ref{sec:preliminaries} collects the background material used throughout the paper. In Section~\ref{S:SD}, we introduce the strong dilation relation and relate maximality for this relation to the unique tight extension property. Section~\ref{S:ISD} develops the integral subdivision relation and gives a state-space reformulation of the amended hyperrigidity theorem. Subsection~\ref{SS:multiplicityfree} proves that the strong dilation and integral subdivision relations coincide in the multiplicity-free setting, while Subsection~\ref{SS:Choquet} identifies these relations with the abstract Choquet order through several equivalent characterizations.

	\section{Preliminaries}\label{sec:preliminaries}
	Throughout the paper, all $\C$-algebras are assumed to be unital, and they
	are assumed separable unless explicitly stated otherwise.  If $A$ is a $\C$-algebra, we denote its state space by $\mE(A)$, equipped with the weak-$*$ topology.

	Throughout, by a representation we mean a unital $*$-representation. Let $\vphi\in\mE(A)$. A \emph{representation of the state} $\vphi$ is a triple $(\pi,H,\xi)$, where $\pi:A\to B(H)$ is a representation and $\xi\in H$ is a unit vector such that
	\[
	\vphi(a)=\langle\pi(a)\xi,\xi\rangle,
	\qquad a\in A.
	\]
	The GNS representation of $\vphi$ will be denoted by
	$(\pi_{\vphi},H_{\vphi},\xi_{\vphi})$. It is a cyclic representation of $\vphi$, and $H_{\vphi} = \overline{\pi_{\vphi}(A)\xi_{\vphi}}$.

	For the general theory of $C^*$-algebras, we follow \cite{C*algebraswith,murphy}. For background on operator systems, unital completely positive maps, and related topics, we refer to \cite{paulsenbook}.
	
	\subsection{Dilation order and the unique extension property} 
	
	Let \(S\subset A\) be an operator system with $\C(S)=A$. A representation
	$\pi:A\to B(H)$ has the \emph{unique extension property}, or UEP, with respect to $S$ if the only unital completely positive (UCP) map $ \Phi:A\to B(H)$
	satisfying $\Phi_{|S}= \pi_{|S}$ is $\Phi=\pi$. A \emph{boundary representation} for $S$ is an irreducible
	representation of $A$ with the unique extension property.
	
	The tight version of this notion was introduced by Clouâtre and Thompson
	\cite{clouatre2024rigidity}. A representation $\pi:A\to B(H)$ has the \emph{unique tight extension property}, or UTEP, with respect to $S$ if the only unital completely positive map $ \Phi:A\to \pi(A)''$ satisfying $\Phi_{|S}= \pi_{|S}$ is  $\Phi=\pi$.
	
	Before defining the dilation order, we record the following result. It was established for commutative $\C$-algebras \cite[Section 6]{hyperrigidityforfunctions}, and the same argument applies in the noncommutative setting. For completeness, we include a brief proof sketch.
	
	\begin{proposition}
		Let $S$ be an operator system in a  $\C$-algebra $A$, such that $\C(S)=A$. Let $\vphi, \psi \in \mE(A)$ and assume that $(\pi_{\vphi},H_{\vphi},\xi_{\vphi})$ is the GNS representation of $\vphi$. Then the following are equivalent. 
		\begin{enumerate}
			\item There exists a UCP map $\Phi: A \to B(H_{\vphi})$ such that $\pi_{\vphi}(s)= \Phi(s)$ for all $s \in S$ and $\psi(a)= \langle \Phi(a)\xi_{\vphi}, \xi_{\vphi}\rangle $ for all $a \in A$. 
			\item For every representation $(\pi, H, \xi)$ of $\vphi$, there exists a UCP map $\Phi: A \to B(H)$ such that $\pi(s)= \Phi(s)$ for all $s \in S$ and $\psi(a)= \langle \Phi(a)\xi, \xi\rangle $ for all $a \in A$.
			\item There exists a representation $(\pi, H,\xi)$ of $\vphi$ and a UCP map $\Phi: A \to B(H)$ such that  $\pi(s)= \Phi(s)$ for all $s \in S$ and $\psi(a)= \langle \Phi(a)\xi, \xi\rangle $ for all $a \in A$.
		\end{enumerate}
	\end{proposition}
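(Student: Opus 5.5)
We prove the cycle of implications $(1)\Rightarrow(2)\Rightarrow(3)\Rightarrow(1)$. The implication $(2)\Rightarrow(3)$ is immediate, since the GNS triple $(\pi_{\vphi},H_{\vphi},\xi_{\vphi})$ is itself a representation of $\vphi$.

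For $(1)\Rightarrow(2)$, let $(\pi,H,\xi)$ be an arbitrary representation of $\vphi$, and put $K=\overline{\pi(A)\xi}$. By uniqueness of the GNS construction there is a unitary $U:H_{\vphi}\to K$ with $U\xi_{\vphi}=\xi$ and $U\pi_{\vphi}(a)=\pi(a)U$. Since $K$ reduces $\pi$, we have a decomposition $H=K\oplus K^{\perp}$ with $\pi=\pi_K\oplus\pi_{K^\perp}$. We define
\[
\Phi'(a)=U\Phi(a)U^*\oplus \pi_{K^\perp}(a).
\]
This map is UCP as a direct sum of UCP maps. On $S$ it agrees with $U\pi_{\vphi}(s)U^*\oplus\pi_{K^\perp}(s)=\pi(s)$. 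Moreover,
\[
\langle\Phi'(a)\xi,\xi\rangle=\langle\Phi(a)\xi_{\vphi},\xi_{\vphi}\rangle=\psi(a).
\]

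For $(3)\Rightarrow(1)$, let $(\pi,H,\xi)$ and $\Phi$ be as in (3), and again set $K=\overline{\pi(A)\xi}$, with $P$ the orthogonal projection onto $K$ and $U:H_{\vphi}\to K$ the unitary implementing the GNS equivalence. We define $\Phi_0:A\to B(H_{\vphi})$ by
\[
\Phi_0(a)=U^*P\Phi(a)P|_K\,U.
\]
This map is UCP, being a compression of a UCP map by an isometry. Since $\xi\in K$, we get $\langle\Phi_0(a)\xi_{\vphi},\xi_{\vphi}\rangle=\langle\Phi(a)\xi,\xi\rangle=\psi(a)$. For $s\in S$, $P\Phi(s)P=P\pi(s)P=\pi(s)P$ because $K$ is invariant under $\pi$. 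Hence $\Phi_0(s)=U^*\pi(s)|_K U=\pi_{\vphi}(s)$.

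The main point requiring care is the compression step in $(3)\Rightarrow(1)$. Compressing a UCP map in general destroys multiplicativity, but we only need agreement on $S$, and that holds because $K$ is reducing for $\pi$. No separability or injectivity of $B(H)$ is needed in either direction.
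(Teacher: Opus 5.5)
Your proof is correct and follows the paper's argument. Both proofs decompose $H=\overline{\pi(A)\xi}\oplus\overline{\pi(A)\xi}^{\perp}$, using that the cyclic summand is unitarily equivalent to the GNS representation of $\vphi$. Both then extend by a direct sum with the complementary subrepresentation for $(1)\Rightarrow(2)$ and compress to the cyclic summand for $(3)\Rightarrow(1)$; you only make the implementing unitary $U$ explicit where the paper works up to unitary equivalence.
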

	
	\begin{proof}\label{P:equivalence}
		Every representation $(\pi,H,\xi)$ of $\vphi$ is, up to unitary equivalence, of the form
		\[
		H=H_{\vphi}\oplus H_0,\qquad \pi=\pi_{\vphi}\oplus\rho,\qquad \xi=\xi_{\vphi}\oplus 0
		\]
		for some representation $\rho:A\to B(H_0)$. Indeed, the cyclic subspace $\overline{\pi(A)\xi}$ is reducing for $\pi$ and is unitarily equivalent to the GNS representation of $\vphi$.
		
		Suppose that $(1)$ holds, and let $\Phi_0:A\to B(H_{\vphi})$ be the corresponding UCP map. For a representation of $\vphi$ decomposed as above, the UCP map $\Phi=\Phi_0 \oplus \rho$ satisfies the conditions in $(2)$. Thus $(1)\Rightarrow(2)$, while $(2)\Rightarrow(3)$ is immediate.
		
		Finally, suppose that $(3)$ holds. Using the decomposition above and compressing the corresponding UCP map $\Phi:A\to B(H)$ to the summand $H_{\vphi}$ gives a UCP map $\Phi_0: A \to B(H_{\vphi})$ by
		\[
		\Phi_0(a)=P_{H_{\vphi}}\Phi(a)|_{H_{\vphi}},
		\qquad a\in A.
		\]
		Since $\Phi(s)=\pi(s)$ for $s\in S$ and $\xi=\xi_{\vphi}\oplus0$, this map satisfies $\Phi_0(s)=\pi_{\vphi}(s)$ for every $s\in S$, and $\psi(a)= \langle\Phi_0(a)\xi_{\vphi},\xi_{\vphi}\rangle$ for every $a\in A$. Hence $(3)\Rightarrow(1)$.
	\end{proof}

	\begin{definition}
		Let $\varphi,\psi\in\mE(A)$. We write $\varphi\preceq_{\mathrm D}\psi$ if any, and hence all, of the equivalent conditions in Proposition~\ref{P:equivalence} hold.
	\end{definition}
	
	This is a partial order due to \cite[Proposition 7.2.8 and Theorem 8.5.1]{davidson2025noncommutative}. We refer to this order as the \emph{dilation order} associated with the pair $(S,A)$.  A state $\vphi \in \mE(A)$ is \emph{maximal} in the dilation order if for $\psi \in \mE(A)$,
	\[
	\varphi \pd\psi
	\quad\Longrightarrow\quad
	\psi=\varphi.
	\]
	We will denote the set of all maximal states in the dilation order by $\dd$. Then $\dd$ is a Borel measurable subset of $\mE(A)$ by \cite[Lemma 2.1 and Theorem 3.8]{boundaryprojection}. 
	
	The next result states that maximality in the dilation order precisely detects cyclic representations having the unique extension property. It was proved in the commutative setting by Davidson and Kennedy \cite[Theorem~7.6]{hyperrigidityforfunctions}; see also \cite[Theorem~3.2.1]{saikia2025rigidity} for the noncommutative version. For completeness, we include below a brief sketch of the proof in the noncommutative setting, which is a direct adaptation of the commutative argument.

	\begin{theorem}\label{T:dilationUEP}
		Let $S\subset A$ be an operator system with $\C(S)=A$, and let
		$\varphi\in \mE(A)$. Then $\varphi$ is maximal for the dilation order
		$\preceq_{\mathrm D}$ if and only if the GNS representation
		$\pi_\varphi$ has the unique extension property with respect to $S$.
	\end{theorem}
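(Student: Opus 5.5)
We prove both directions directly from condition (1) of Proposition~\ref{P:equivalence}, working on the GNS space $H_\varphi$ throughout.

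\emph{UEP implies maximality.} Suppose $\pi_\varphi$ has the unique extension property and $\varphi\preceq_{\mathrm D}\psi$. By condition (1) there is a UCP map $\Phi:A\to B(H_\varphi)$ with $\Phi_{|S}=\pi_{\varphi|S}$ and $\psi=\langle\Phi(\cdot)\xi_\varphi,\xi_\varphi\rangle$. The unique extension property forces $\Phi=\pi_\varphi$. Hence $\psi=\varphi$, so $\varphi$ is maximal.

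\emph{Maximality implies UEP.} Assume $\varphi$ is maximal and let $\Phi:A\to B(H_\varphi)$ be UCP with $\Phi_{|S}=\pi_{\varphi|S}$; we must show $\Phi=\pi_\varphi$.

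First, fix $b\in A$ with $\varphi(b^*b)\neq 0$ and set $\eta=\pi_\varphi(b)\xi_\varphi/\|\pi_\varphi(b)\xi_\varphi\|$. The triple $(\pi_\varphi,H_\varphi,\eta)$ is a representation of the state $\varphi_b(a)=\varphi(b^*ab)/\varphi(b^*b)$. Together with $\Phi$, it witnesses condition (3) for the pair $\varphi_b$ and $\psi_b:=\langle\Phi(\cdot)\eta,\eta\rangle$. Hence $\varphi_b\preceq_{\mathrm D}\psi_b$. However, maximality is assumed only for $\varphi$, not for $\varphi_b$, so this reduction alone does not close the argument.

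Instead, we use the multiplicative domain. Applying condition (1) to $\varphi$ itself gives $\varphi\preceq_{\mathrm D}\psi$ with $\psi=\langle\Phi(\cdot)\xi_\varphi,\xi_\varphi\rangle$, so maximality yields
\[
\langle\Phi(a)\xi_\varphi,\xi_\varphi\rangle=\varphi(a)\quad\text{for all } a\in A.
\]
To propagate this from the cyclic vector to the whole space, we compose with inner automorphisms. For a unitary $u\in A$, the map $\Phi^u(a)=\pi_\varphi(u)^*\Phi(uau^*)\pi_\varphi(u)$ is UCP and agrees with $\pi_\varphi$ on $u^*Su$, not on $S$, so it cannot be fed back into the hypothesis. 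The cleaner route is a Schwarz-inequality argument. For $s\in S$ we have $\Phi(s^*s)\ge\Phi(s)^*\Phi(s)=\pi_\varphi(s^*s)$, while
\[
\langle(\Phi(s^*s)-\pi_\varphi(s^*s))\xi_\varphi,\xi_\varphi\rangle=0
\]
by the displayed equality. Since the operator $\Phi(s^*s)-\pi_\varphi(s^*s)$ is positive and its expectation at $\xi_\varphi$ vanishes, it annihilates $\xi_\varphi$. The goal is to show $s$ lies in the multiplicative domain of $\Phi$. Because $S$ generates $A$ as a C*-algebra, this would give $\Phi=\pi_\varphi$.

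The main obstacle is the upgrade from ``annihilates $\xi_\varphi$'' to ``vanishes on all of $H_\varphi$''. We would handle it following Davidson--Kennedy's commutative proof: replace $\xi_\varphi$ by the vectors $\pi_\varphi(b)\xi_\varphi$, and recover a dilation relation starting at $\varphi$ via a convex combination trick. For a positive contraction $b$ with $0<\varphi(b^*b)<1$, decompose
\[
\varphi=\varphi(b^*b)\,\varphi_b+\varphi(1-b^*b)\,\varphi_{(1-b^*b)^{1/2}}.
\]
We then use a dilation of $\varphi$ built from $\Phi$ on one block and $\pi_\varphi$ on the other, via a column isometry $H_\varphi\to H_\varphi\oplus H_\varphi$ intertwining appropriately. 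Maximality of $\varphi$ then forces $\langle\Phi(a)\pi_\varphi(b)\xi_\varphi,\pi_\varphi(b)\xi_\varphi\rangle=\varphi(b^*ab)$, hence by polarization and density $\Phi=\pi_\varphi$. Making this block construction genuinely intertwine $S$ is the delicate step; if it fails, we would fall back on the argument in \cite[Theorem~3.2.1]{saikia2025rigidity}.
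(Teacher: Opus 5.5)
Your first direction (unique extension property $\Rightarrow$ maximality) is correct and is the paper's argument. The converse, however, is not proved. The Schwarz-inequality step only gives $(\Phi(s^*s)-\pi_\varphi(s^*s))\xi_\varphi=0$ for $s\in S$, and the convex-combination trick you propose for the upgrade fails in the noncommutative setting.

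For $b\in A$, the identity $\varphi=\varphi(b^*b)\varphi_b+\varphi(1-b^*b)\varphi_{(1-b^*b)^{1/2}}$ is false in general. Take $A=M_2$, $\varphi$ the vector state at $(e_1+e_2)/\sqrt2$, $b=e_{11}$ and $a=e_{12}$: the right-hand side vanishes at $a$, while $\varphi(a)=1/2$. Likewise, the block map $a\mapsto\pi_\varphi(b)\Phi(a)\pi_\varphi(b)+\pi_\varphi(c)\pi_\varphi(a)\pi_\varphi(c)$, with $c=(1-b^*b)^{1/2}$, agrees with $\pi_\varphi$ on $S$ only when $\pi_\varphi(b)$ commutes with $\pi_\varphi(S)$. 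So the ``delicate step'' you flag genuinely fails.

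Replacing $b$ by a positive contraction $T\in\pi_\varphi(A)'$ repairs both defects. But then maximality only controls $\Phi$ on vectors in $\pi_\varphi(A)'\xi_\varphi$. These are dense only when $\xi_\varphi$ is separating for $\pi_\varphi(A)''$. That holds in the commutative case, which is why the Davidson--Kennedy argument works there. It fails, for example, for a pure state of $M_n$, where $\pi_\varphi(A)'\xi_\varphi=\mathbb C\xi_\varphi$. Falling back on a citation does not close the gap.

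The paper instead takes a maximal dilation $\sigma$ of $\pi_\varphi|_S$, which has the unique extension property, with isometry $V$. The state $\psi=\langle\sigma(\cdot)V\xi_\varphi,V\xi_\varphi\rangle$ satisfies $\varphi\preceq_{\mathrm D}\psi$, so maximality forces $\psi=\varphi$. Hence $\pi_\varphi$ is unitarily equivalent to the cyclic subrepresentation of $\sigma$ at $V\xi_\varphi$. That subrepresentation is a direct summand of $\sigma$, so it inherits the unique extension property.

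Alternatively, your own route can be finished without any convex combination. You already know $\langle\Phi(a)\xi_\varphi,\xi_\varphi\rangle=\varphi(a)$ for \emph{all} $a\in A$, not only at elements $s^*s$. Write $\Phi=V^*\rho V$ (Stinespring) and prove by induction on words $w=s_1\cdots s_k$ in $S$ that $\rho(w)V\xi_\varphi=V\pi_\varphi(w)\xi_\varphi$. Suppose this holds for $w$, and let $s\in S$. Then $x=\rho(s)V\pi_\varphi(w)\xi_\varphi=\rho(sw)V\xi_\varphi$ has $\|x\|^2=\varphi((sw)^*sw)$ by the state identity. Meanwhile $VV^*x=V\Phi(s)\pi_\varphi(w)\xi_\varphi=V\pi_\varphi(sw)\xi_\varphi$ has the same norm, so $x=VV^*x$. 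Your Schwarz computation is exactly the first step ($w=1$) of this induction. By density, $\rho(a)V\xi_\varphi=V\pi_\varphi(a)\xi_\varphi$ for all $a\in A$. Hence $\Phi(a)\pi_\varphi(b)\xi_\varphi=V^*\rho(ab)V\xi_\varphi=\pi_\varphi(a)\pi_\varphi(b)\xi_\varphi$, so $\Phi=\pi_\varphi$. This is precisely the argument in the proof of Theorem~\ref{T:UTEPmaximality}, which never uses that $\Phi$ takes values in $\pi_\varphi(A)''$.
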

	\begin{proof}
		Suppose that $\pi_{\vphi}: A \to B(H_{\vphi})$ has the unique extension property relative to $S$ and let $\psi \in \mE(A)$ such that $\vphi \pd \psi$. Then there exists a UCP map $\Phi: A \to B(H_{\vphi})$ such that $\pi_{\vphi}(s)=\Phi(s)$ for all $s \in S$ and $\psi(a)= \langle \Phi(a)\xi_{\vphi}, \xi_{\vphi} \rangle$ for all $a \in A$. The unique extension property of $\pi_{\vphi}$ forces $\pi_{\vphi}= \Phi$. This implies that $\psi=\vphi$ and hence $\vphi$ is maximal in $\pd$.
		
		Conversely, suppose that $\vphi$ is maximal in the dilation order. Choose a maximal dilation $\sigma:A\to B(K)$ of $\pi_{\vphi}$, so that $\sigma$ has the unique extension property \cite[Proposition 2.4 and Theorem 2.5]{Arveson_bdd_1} and there is an isometry $V:H_{\vphi}\to K$ satisfying
		\[
		\pi_{\vphi}(s)=V^*\sigma(s)V,
		\qquad s\in S.
		\]
		Define $\psi(a)=\langle \sigma(a)V\xi_{\vphi},V\xi_{\vphi}\rangle$ for all $a \in A$. By Proposition~\ref{P:equivalence}, $\vphi\pd\psi$. Hence the maximality of $\vphi$ gives $\psi=\vphi$. Therefore, the cyclic subrepresentation of $\sigma$ generated by $V\xi_{\vphi}$ is unitarily equivalent to $\pi_{\vphi}$. Since this cyclic subrepresentation is a direct summand of $\sigma$, it has the unique extension property. Consequently, $\pi_{\vphi}$ has the unique extension property.
	\end{proof}

	More generally, if $\preceq$ is a relation on $\mE(A)$, a state
	$\varphi\in \mE(A)$ is called \emph{maximal} for $\preceq$ if
	\[
	\varphi\preceq\psi
	\quad\Longrightarrow\quad
	\psi=\varphi.
	\]
	
	\subsection{A lifting property}\label{SS:Lifting}
	Next, we recall lifting theorem of Maharam \cite{Maharam, Topic_in_Theory_of_Lifting}, which will be used in the proofs of several results in this article.
	
	Let $(X,\mathcal A,\mu)$ be a complete probability space, and let
	$\mathscr L^\infty(X,\mathcal A)$ denote the space of bounded
	$\mathcal A$-measurable functions on $X$. If $q: \mathscr L^\infty(X,\mathcal A) \to L^\infty(X,\mu)$ denotes the canonical quotient map, then the lifting theorem
	\cite[Definition 10.5.1 and Theorem 10.5.4]{Bogachev_Measure} yields a
	unital $*$-homomorphism $\ell :  L^\infty(X,\mu) \to \mathscr L^\infty(X,\mathcal A)$ such that $q\circ \ell=\operatorname{id}_{L^\infty(X,\mu)}$. Thus, $\ell ([f])$ is a bounded measurable representative of the
	equivalence class $[f]$.
	
	More generally, suppose that $\mu$ is a possibly incomplete probability
	measure on $(X,\mathcal A)$. Let $\mathcal A_\mu$ denote the completion
	of $\mathcal A$ with respect to $\mu$, and let $\overline{\mu}$ be the
	corresponding completed measure. The map $[f]_\mu\longmapsto [f]_{\overline{\mu}}$ defines a canonical isometric $*$-isomorphism
	\[
	L^\infty(X,\mu)\cong L^\infty(X,\overline{\mu}).
	\]
	Applying the lifting theorem to the complete probability space
	$(X,\mathcal A_\mu,\overline{\mu})$, we obtain a unital $*$-homomorphism $\ell\colon L^\infty(X,\mu)
	\longrightarrow \mathscr L^\infty(X,\mathcal A_\mu)$ such that $\ell([f])=f$ $\overline{\mu}$-almost everywhere for every bounded $\mathcal A$-measurable representative $f$ of $[f]$.
	
	\subsection{Commutative $\C$-algebras and classical Choquet order}
	
	Here we briefly recall the classical Choquet order, which is a partial
	order on the Borel probability measures on a compact convex set; see
	\cite{alfsen2012compact,phelps2002lectures}. Let $X$ be a compact
	Hausdorff space. By the Riesz--Markov--Kakutani theorem, the state space
	of $C(X)$ is naturally identified with the set $\operatorname{Prob}(X)$
	of regular Borel probability measures on $X$. Under this identification,
	\[
	\mu(f)=\int_X f\,d\mu, \qquad f\in C(X).
	\]
	The GNS representation associated with $\mu\in\operatorname{Prob}(X)$ is
	given by
	\[
	\mathcal H_\mu=L^2(X,\mu), \qquad \xi_\mu=\mathbf 1_X, \qquad
	\pi_\mu(f)=M_f,
	\]
	where $M_f$ denotes the multiplication operator by $f$. Identifying
	$L^\infty(X,\mu)$ with its multiplication representation on $L^2(X,\mu)$, we have
	\[
	\pi_\mu(C(X))'' = \{M_g:g\in L^\infty(X,\mu)\} \cong L^\infty(X,\mu).
	\]
	
	Let $K$ be a compact metrizable convex subset of a locally convex space,
	and let $A(K)$ denote the function system of continuous affine functions
	on $K$. Since $A(K)$ separates the points of $K$, the Stone--Weierstrass theorem gives $C^*(A(K))=C(K)$.
	
	\begin{definition}
		Let $\mu,\nu\in\operatorname{Prob}(K)$. We write $\mu\preceq_{\mathrm C}\nu$ if $\mu(f) \leq \nu(f)$ for every real-valued continuous convex function $f$ on $K$. This relation is called the \emph{Choquet order}.
	\end{definition}
	
	Following Davidson and Kennedy \cite[Definition 3.1]{hyperrigidityforfunctions}, we say that $\mu$ is dominated by $\nu$
	in the \emph{strong dilation order}, and write $\mu\ps\nu$,
	if there is a unital positive map $\Phi\colon C(K)\longrightarrow L^\infty(K,\mu)$ such that $\Phi(\mathfrak a)=\mathfrak a$ for all $\mathfrak a\in A(K)$ and $\displaystyle \int_K f\,d\nu= \int_K\Phi(f)\,d\mu$ for all $f \in C(K)$. 
	
	For $x\in K$, let
	\[
	\operatorname{Prob}_x(K) = \left\{
	\lambda\in\operatorname{Prob}(K): \int_K \mathfrak a\,d\lambda=\mathfrak a(x) \text{ for every } \mathfrak a\in A(K)
	\right\}.
	\]
	The following combines the equivalence of the Choquet and strong
	dilation orders with Cartier's theorem \cite[Definition 3.1 and Theorems 3.7 and 4.1]{hyperrigidityforfunctions}
	.
	
	\begin{theorem} \label{T:Choquet}
		Let $K$ be a compact metrizable convex set and let
		$\mu,\nu\in\operatorname{Prob}(K)$. The following statements are
		equivalent:
		\begin{enumerate}
			\item $\mu\preceq_{\mathrm C}\nu$;
			\item $\mu \ps \nu$;
			\item there is a family $\{\lambda_x\}_{x \in K}\subset \operatorname{Prob}(K)$
			such that $x\mapsto\lambda_x(f)$ is $\mu$-measurable for every
			$f\in C(K)$, $\lambda_x\in\operatorname{Prob}_x(K)$ for $\mu$-almost every $x\in K$, and
			\[
			\int_K f\,d\nu = \int_K\lambda_x(f)\,d\mu(x), \qquad f\in C(K).
			\]
		\end{enumerate}
	\end{theorem}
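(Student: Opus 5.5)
We prove the cycle $(3)\Rightarrow(2)\Rightarrow(1)\Rightarrow(3)$. Throughout we use that $K$ is metrizable, so $C(K)$ and $A(K)$ are separable. Separability lets us replace ``for every $f$'' by ``for a countable dense family'' whenever we need almost-everywhere statements.

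\emph{$(3)\Rightarrow(2)$.} Given the kernel $\{\lambda_x\}$, define $\Phi(f)=[x\mapsto\lambda_x(f)]\in L^\infty(K,\mu)$. This is well defined because $x\mapsto \lambda_x(f)$ is $\mu$-measurable and bounded by $\|f\|$. Positivity and unitality hold pointwise, and $\int_K\Phi(f)\,d\mu=\int_K f\,d\nu$ by hypothesis. Since $\lambda_x\in\operatorname{Prob}_x(K)$ for $\mu$-a.e.\ $x$, we get $\Phi(\mathfrak a)(x)=\mathfrak a(x)$ a.e.\ for each $\mathfrak a\in A(K)$, that is, $\Phi(\mathfrak a)=\mathfrak a$ in $L^\infty(K,\mu)$.

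\emph{$(2)\Rightarrow(1)$.} Let $f$ be a continuous convex function on $K$. By Hahn--Banach separation in $K\times\mathbb R$, $f$ is the pointwise supremum of the continuous affine functions $\mathfrak a\le f$. Since $A(K)$ is separable, a countable family $\{\mathfrak a_n\}$ of these already satisfies $f=\sup_n\mathfrak a_n$ pointwise: take a countable dense subset of the minorants and use Dini-type approximation. Positivity of $\Phi$ then gives $\Phi(f)\ge\Phi(\mathfrak a_n)=\mathfrak a_n$ a.e.\ for every $n$. Taking the countable supremum yields $\Phi(f)\ge f$ $\mu$-a.e. Integrating,
\[
\nu(f)=\int_K\Phi(f)\,d\mu\ \ge\ \mu(f).
\]

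\emph{$(1)\Rightarrow(3)$.} This is the Cartier--Strassen step and the main obstacle. For $g\in C(K)$ let $\bar g(x)=\inf\{\mathfrak a(x):\mathfrak a\in A(K),\ \mathfrak a\ge g\}$ be the upper concave envelope. It is concave, upper semicontinuous and bounded. On $C(K\times K)$ define
\[
p(F)=\int_K \overline{F(x,\cdot)}(x)\,d\mu(x).
\]
This functional is sublinear, and it is monotone with $p(F)\le\|F\|_\infty$. On the subspace spanned by functions $F(x,y)=g(x)\mathfrak a(y)+f(y)$, with $g\in C(K)$, $\mathfrak a\in A(K)$ and $f\in C(K)$, set
\[
L_0(F)=\int_K g\mathfrak a\,d\mu+\nu(f).
\]
The key inequality is $L_0\le p$. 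For $F$ of the above form, $\overline{F(x,\cdot)}(x)=g(x)\mathfrak a(x)+\bar f(x)$, since adding an affine function commutes with taking the concave envelope. It therefore suffices to show $\nu(f)\le\mu(\bar f)$. We have $\nu(f)\le\nu(\bar f)$ because $f\le\bar f$. Next, $-\bar f$ is convex and lower semicontinuous, hence an increasing limit of continuous convex functions, so hypothesis (1) and monotone convergence give $\nu(\bar f)\le\mu(\bar f)$. One must also check that $L_0$ is well defined on this non-direct sum; the same domination applied to $\pm F$ gives this.

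By Hahn--Banach we extend $L_0$ to a linear functional $L\le p$ on $C(K\times K)$. From $L(-F)\le p(-F)\le 0$ for $F\ge0$, $L$ is positive and unital, so it is a probability measure $m$ on $K\times K$. Testing against $g\otimes1$ and $1\otimes f$ shows that $m$ has marginals $\mu$ and $\nu$. Disintegrating $m$ along the first coordinate gives a measurable family $\lambda_x\in\operatorname{Prob}(K)$ with
\[
m=\int_K\delta_x\otimes\lambda_x\,d\mu(x),\qquad \int_K f\,d\nu=\int_K\lambda_x(f)\,d\mu(x).
\]
For the barycenter condition, testing against $g\otimes\mathfrak a$ gives $\int_K g(x)\lambda_x(\mathfrak a)\,d\mu=\int_K g\,\mathfrak a\,d\mu$ for all $g\in C(K)$, so $\lambda_x(\mathfrak a)=\mathfrak a(x)$ a.e. Running this over a countable dense subset of $A(K)$ and discarding a single null set gives $\lambda_x\in\operatorname{Prob}_x(K)$ for $\mu$-a.e.\ $x$.

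Alternatively, one could obtain the kernel in $(2)\Rightarrow(3)$ directly by composing $\Phi$ with the Maharam lifting $\ell$ of Subsection~\ref{SS:Lifting} and setting $\lambda_x(f)=\ell(\Phi(f))(x)$. The delicate points there are that $\ell$ is only defined modulo the completion of $\mu$, which is why (3) asks for $\mu$-measurability, and again the countable-density argument for the barycenter condition.
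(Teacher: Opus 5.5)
Your proof is correct, and it takes a genuinely different route from the paper. The paper gives no argument for this theorem: it imports the statement from Davidson and Kennedy, combining their equivalence of the Choquet and strong dilation orders with their Cartier-type disintegration result. You instead close the cycle $(3)\Rightarrow(2)\Rightarrow(1)\Rightarrow(3)$ by hand:
\begin{itemize}
\item in $(3)\Rightarrow(2)$, the kernel integrates to a strong-dilation witness;
\item in $(2)\Rightarrow(1)$, positivity of $\Phi$ together with a countable family of affine minorants gives $\Phi(f)\ge f$ $\mu$-a.e.\ for convex $f$;
\item $(1)\Rightarrow(3)$ is the classical Cartier--Strassen coupling argument: a Hahn--Banach extension of $L_0$ under the envelope functional $p$ on $C(K\times K)$, identification of the marginals, and disintegration over the first coordinate.
\end{itemize}

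The trade-off is that your route uses metrizability essentially. You need separability of $A(K)$ for the countable supremum, for the monotone-convergence step $\nu(\bar f)\le\mu(\bar f)$, and for collecting null sets in the barycenter condition; you also need the disintegration theorem on $K\times K$. This is fine, since the statement assumes $K$ metrizable. The Davidson--Kennedy results, by contrast, are set up to cover non-metrizable $K$ as well, with $L^\infty(K,\mu)$-valued maps replacing pointwise kernels. Your self-contained argument makes the theorem independent of that machinery in the setting the paper actually uses.

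Your alternative remark, obtaining the kernel from $\Phi$ via the Maharam lifting, is exactly what the paper does later in the proof of Theorem~\ref{T:choquet-isd-characterization}, $(2)\Rightarrow(4)$. Your insistence on a countable dense subset of $A(K)$ for the barycenter condition supplies a step the paper leaves implicit there.

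Two small things to tighten:
\begin{itemize}
\item The functional $p$ is only defined once you know that $x\mapsto\overline{F(x,\cdot)}(x)$ is measurable. It is upper semicontinuous, for three reasons: $g\mapsto\bar g$ is $1$-Lipschitz for the sup norm, $x\mapsto F(x,\cdot)$ is norm continuous by uniform continuity of $F$, and each $\bar g$ is upper semicontinuous.
\item The domination $L_0\le p$ has to be verified on finite sums $\sum_i g_i\otimes\mathfrak a_i+1\otimes f$, not only on single generators, since $p$ is merely sublinear. Your computation goes through verbatim, because $\sum_i g_i(x)\mathfrak a_i$ is affine for fixed $x$.
\end{itemize}
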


	\section{The strong dilation relation and the unique tight extension property}\label{S:SD}
	In this section we introduce the strong dilation relation associated with an operator system $S\subset A$, and we explain its connection with the unique tight extension property for representations. Strong dilation is generally more restrictive than ordinary dilation. The main result of this section shows that this stronger state-space relation exactly detects tight uniqueness: a cyclic representation has the unique tight extension property precisely when its corresponding GNS state is maximal in the strong dilation relation.
	
	Recall that for every state $\vphi \in \mE(A)$, the GNS representation of $\vphi$ is denoted by $(\pi_{\vphi}, H_{\vphi}, \xi_{\vphi})$. Next we define the strong dilation relation on $\mE(A)$, which is motivated by the strong dilation order introduced by Davidson and Kennedy for measures on a compact convex set $K$ \cite[Definition~3.1]{hyperrigidityforfunctions}.
	\begin{definition}[Strong dilation relation]
		Let $S$ be an operator system in a C*-algebra $A$ such that $\C(S)=A$, and let  $\vphi, \psi \in \mE(A)$. We define the \emph{strong dilation relation} $\ps$ on $\mE(A)$ by declaring that $\varphi\ps\psi$ if there exists a UCP map $\Phi: A \to \pi_{\vphi}(A)''$ such that
		\[
		\Phi(s)= \pi_{\vphi}(s), \qquad  s \in S,
		\]
		and 
		\[
		\psi(a)= \langle \Phi(a)\xi_{\vphi}, \xi_{\vphi} \rangle, \qquad  a \in A.
		\]
		Note that the strong dilation relation depends on both $S$ and $A$, and we call it the strong dilation relation associated with the pair $(S,A)$. However, the pair $(S,A)$ will be clearly understood throughout the paper from the context, and we will simply write $\ps$ without mentioning the pair $(S,A)$. Finally, $\vphi \in \mE(A)$ is said to be \emph{maximal} in $\ps$ if
		\[
		\vphi \ps \psi \implies \psi = \vphi.
		\]
	\end{definition}

	In the commutative setting, the strong dilation relation is a partial
	order, since it coincides with the Choquet order \cite[Theorem 3.7]{hyperrigidityforfunctions}. It is not known whether $\ps$ is transitive in the general noncommutative setting, and therefore we do not claim that it defines a partial order in this setting. Nevertheless, $\ps$ is reflexive and antisymmetric. Reflexivity follows by taking $\Phi=\pi_\varphi$.
	Moreover, strong dilation implies domination in the dilation order:
	\[
	\varphi\ps \psi \quad\Longrightarrow\quad
	\varphi\pd\psi.
	\]
	Consequently, if both $\varphi\ps\psi$ and
	$\psi\ps\varphi$, then
	\[
	\vphi\pd\psi
	\quad\text{and}\quad
	\psi\pd\vphi.
	\]
	Since $\pd$ is antisymmetric, it follows that
	$\vphi=\psi$.

	The next part of this section explores the connection between the strong dilation relation and the unique tight extension property. The next result shows that the unique tight extension property respects direct sums. 
	
	\begin{proposition}\label{P:UTEPdirectsum}
		Let $S$ be an operator system in a $\C$-algebra $A$ such that $\C(S)=A$. Let $I$ be an indexing set and let $\pi_i: A \longrightarrow B(H_i)$ be a representation of $A$ on $H_i$ for all $i \in I$.  Suppose that
		\[
		H=\bigoplus_{i\in I}H_i,
		\qquad
		\pi=\bigoplus_{i\in I}\pi_i.
		\]
		Then $\pi$ has the unique tight extension property with respect to $S$ if and only if each $\pi_i$ has the unique tight extension property with respect to $S$.
	\end{proposition}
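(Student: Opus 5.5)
The plan is to compare the von Neumann algebra $\pi(A)''$ with the algebras $\pi_i(A)''$ through the restriction maps to the summands. Write $P_i$ for the orthogonal projection of $H$ onto $H_i$. Since each $H_i$ reduces $\pi$, we have $P_i\in\pi(A)'$. Consequently every element of $\pi(A)''$ commutes with every $P_i$ and is block diagonal.

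For each $i$ consider the map
\[
\theta_i:\pi(A)''\to B(H_i),\qquad \theta_i(x)=x|_{H_i}.
\]
I will check the following properties of $\theta_i$.
\begin{itemize}
\item It is a unital normal $*$-homomorphism, because $P_i$ commutes with $\pi(A)''$.
\item It satisfies $\theta_i(\pi(a))=\pi_i(a)$.
\item Its range is exactly $\pi_i(A)''$. The range is weak$^*$-closed, being the image of a von Neumann algebra under a normal $*$-homomorphism. It contains $\pi_i(A)$. It lies in the strong closure of $\pi_i(A)$ by normality and the double commutant theorem.
\end{itemize}
The kernel of $\theta_i$ is a weak$^*$-closed two-sided ideal. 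Hence it has the form $\pi(A)''(1-z_i)$ for a central projection $z_i\in\pi(A)''$. Then $\theta_i(z_i)=1$, and $\theta_i$ restricts to a $*$-isomorphism of $\pi(A)''z_i$ onto $\pi_i(A)''$. This structural step, showing that $\pi_i(A)''$ is a corner $\pi(A)''z_i$ of $\pi(A)''$, is the main point. It is needed because $\pi(A)''$ may be strictly smaller than $\bigoplus_i\pi_i(A)''$ when the $\pi_i$ are not mutually disjoint. So one cannot simply extend a map into $\pi_j(A)''$ by $\pi$ on the other summands.

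For ($\Leftarrow$), assume each $\pi_i$ has UTEP, and let $\Phi:A\to\pi(A)''$ be UCP with $\Phi|_S=\pi|_S$. Set $\Phi_i=\theta_i\circ\Phi$. This is a UCP map $A\to\pi_i(A)''$, and $\Phi_i|_S=\pi_i|_S$. UTEP of $\pi_i$ gives $\Phi_i=\pi_i$ for every $i$. Since each $\Phi(a)$ is block diagonal, $\Phi(a)=\bigoplus_i\theta_i(\Phi(a))=\bigoplus_i\pi_i(a)=\pi(a)$.

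For ($\Rightarrow$), assume $\pi$ has UTEP, fix $j$, and let $\Phi_j:A\to\pi_j(A)''$ be UCP with $\Phi_j|_S=\pi_j|_S$. Let $\theta_j^{-1}$ denote the inverse of the isomorphism $\pi(A)''z_j\to\pi_j(A)''$, and define
\[
\Phi(a)=\theta_j^{-1}\bigl(\Phi_j(a)\bigr)+\pi(a)(1-z_j),\qquad a\in A.
\]
\begin{itemize}
\item $\Phi$ is completely positive and takes values in $\pi(A)''$. It is a sum of completely positive maps into the complementary central summands $\pi(A)''z_j$ and $\pi(A)''(1-z_j)$.
\item $\Phi$ is unital, since $\theta_j^{-1}(1)=z_j$.
\item $\Phi|_S=\pi|_S$. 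For $s\in S$ we have $\theta_j(\pi(s)z_j)=\pi_j(s)=\Phi_j(s)$, so $\theta_j^{-1}(\Phi_j(s))=\pi(s)z_j$, and therefore $\Phi(s)=\pi(s)$.
\end{itemize}
UTEP of $\pi$ now gives $\Phi=\pi$. Applying $\theta_j$ and using $\theta_j(1-z_j)=0$, we get
\[
\Phi_j(a)=\theta_j(\Phi(a))=\theta_j(\pi(a))=\pi_j(a).
\]
Thus $\pi_j$ has UTEP.
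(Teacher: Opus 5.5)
Your proof is correct and follows the paper's route. Both use the normal compression homomorphisms $\pi(A)''\to\pi_i(A)''$, and your reverse implication is the paper's argument. For the forward implication the paper cites Clou\^atre--Thompson's Lemma~2.6: the unique tight extension property passes along a surjective normal $*$-homomorphism $\Omega$ with $\Omega\circ\pi=\pi_i$. You instead prove this inline, using the central projection $z_j$ that complements $\ker\theta_j$, which also correctly handles the non-disjoint case you point out.
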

	
	\begin{proof}
		Let $P_i$ denote the projection of $H$ onto $H_i$. Since $H_i$ reduces
		$\pi(A)$, we have $P_i\in \pi(A)'$. Hence every element of $\pi(A)''$ commutes with $P_i$, and compression gives a normal surjective $*$-homomorphism
		\[
		\Omega_i:\pi(A)''\to \pi_i(A)'',
		\qquad
		\Omega_i(x)=x|_{H_i}.
		\]
		Indeed, $\Omega_i(\pi(a))=\pi_i(a)$ for every $a\in A$. Moreover, the range of $\Omega_i$ is a weak-$*$ closed $*$-subalgebra of $B(H_i)$ containing
		$\pi_i(A)$, and it is contained in $\pi_i(A)''$; hence it equals
		$\pi_i(A)''$.
		
		Assume first that $\pi$ has the unique tight extension property. Since
		$\Omega_i$ is a weak-$*$ continuous surjective $*$-homomorphism and $\Omega_i\circ \pi=\pi_i$, it follows from \cite[Lemma~2.6]{clouatre2024rigidity} that $\pi_i$ has the unique
		tight extension property for every $i\in I$.
		
		Conversely, assume that each $\pi_i$ has the unique tight extension property. Let $\Phi:A\to \pi(A)''$ be a unital completely positive map such that $\Phi(s)=\pi(s)$ for all $s \in S$. For each $i\in I$, the map $\Omega_i\circ\Phi:A\to \pi_i(A)''$ is unital completely positive and satisfies $(\Omega_i\circ\Phi)(s)=\pi_i(s)$ for all $s \in S$. By the unique tight extension property of $\pi_i$, we obtain $\Omega_i(\Phi(a))=\pi_i(a)$ for all $a \in A$. Fix $a\in A$  and let
		\[
		x=\Phi(a)-\pi(a)\in \pi(A)''.
		\]
		Then
		\[
		x|_{H_i}=0,
		\qquad i\in I.
		\]
		Since $x$ commutes with each $P_i$, each subspace $H_i$ reduces $x$. Then $x=0$, since $H_i$'s spans $H$. Thus $\Phi(a)=\pi(a)$ for all $a \in A$. Since $a$ was arbitrary, $\Phi=\pi$. Hence $\pi$ has the unique tight extension property.
	\end{proof}
	
	Every representation of a $\C$-algebra $A$ decomposes as a direct sum of cyclic representations. Consequently, Proposition~\ref{P:UTEPdirectsum} reduces the verification of the unique tight extension property to the cyclic summands. Moreover, every cyclic representation of $A$ is unitarily equivalent to the GNS representation of an associated state. We next show that maximality in the strong dilation relation detects precisely those states whose GNS representations have the unique tight extension property. In the spirit of Theorem~\ref{T:dilationUEP}, the next result shows that the strong dilation relation is appropriate for detecting the unique tight extension property at the level of the state space.

	\begin{theorem}\label{T:UTEPmaximality}
		Let $S$ be an operator system in the $\C$-algebra $A$ such that $\C(S)=A$ and let $\vphi \in \mE(A)$ with the GNS representation $(\pi_{\vphi}, H_{\vphi}, \xi_{\vphi})$. Then $\pi_{\vphi}$ has the unique tight extension property if and only if $\vphi$ is maximal in the strong dilation relation. 
	\end{theorem}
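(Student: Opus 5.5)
The forward direction is immediate from the definitions. Suppose $\pi_{\vphi}$ has the unique tight extension property, and let $\psi\in\mE(A)$ with $\vphi\ps\psi$. Choose a UCP map $\Phi:A\to\pi_{\vphi}(A)''$ witnessing the relation. Since $\Phi$ agrees with $\pi_{\vphi}$ on $S$, the unique tight extension property forces $\Phi=\pi_{\vphi}$. Hence
\[
\psi(a)=\langle\pi_{\vphi}(a)\xi_{\vphi},\xi_{\vphi}\rangle=\vphi(a)
\]
for all $a \in A$, so $\vphi$ is maximal in $\ps$.

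For the converse, assume $\vphi$ is maximal in $\ps$. Let $\Phi:A\to\pi_{\vphi}(A)''$ be UCP with $\Phi|_S=\pi_{\vphi}|_S$. We must show $\Phi=\pi_{\vphi}$.

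\emph{Step 1: test against the cyclic vector.} Define $\psi(a)=\langle\Phi(a)\xi_{\vphi},\xi_{\vphi}\rangle$. Then $\vphi\ps\psi$, so maximality gives $\psi=\vphi$, that is, $\langle\Phi(a)\xi_{\vphi},\xi_{\vphi}\rangle=\vphi(a)$ for all $a\in A$. This only controls $\Phi$ at one vector, and upgrading it is the main difficulty.

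\emph{Step 2: use other vectors of the form $\pi_{\vphi}(b)\xi_{\vphi}$.} For $b\in A$ with $\vphi(b^*b)=1$, set $\vphi_b=\vphi(b^*\cdot\,b)$. Its GNS representation can be realized inside $\pi_{\vphi}$ as the cyclic subrepresentation on $\overline{\pi_{\vphi}(A)\pi_{\vphi}(b)\xi_{\vphi}}$. The difficulty is that $\Phi$ does not commute with $\pi_{\vphi}(b)$, so one cannot simply conjugate.

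\emph{Step 3: exploit the von Neumann algebra instead.} Here the tightness helps. Since $\Phi$ takes values in $M=\pi_{\vphi}(A)''$, elements of the commutant $M'$ commute with $\Phi(a)$. Moreover $\xi_{\vphi}$ is cyclic for $M$, hence separating for $M'$, and the vectors $x'\xi_{\vphi}$ with $x'\in M'$ are plentiful. The plan is:
\begin{enumerate}
\item Take a positive $x'\in M'$ with $\|x'\xi_{\vphi}\|=1$ and set $\eta=x'\xi_{\vphi}$. Then $\vphi_{x'}(a)=\langle\pi_{\vphi}(a)\eta,\eta\rangle$ is a state.
\item Its GNS representation is the cyclic subrepresentation of $\pi_{\vphi}$ on $\overline{\pi_{\vphi}(A)\eta}$. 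The projection onto this subspace lies in $M'$, so compressing $M$ to it is a normal surjective $*$-homomorphism onto the corresponding generated von Neumann algebra, exactly as in Proposition~\ref{P:UTEPdirectsum}.
\item Compressing $\Phi$ then gives a tight UCP extension for $\pi_{\vphi_{x'}}$.
\end{enumerate}
To conclude, we need maximality of $\vphi_{x'}$, not just of $\vphi$. Instead, apply Step 1 directly to $\vphi$, using $\Phi$ together with the commutation $\Phi(a)x'=x'\Phi(a)$:
\[
\langle\Phi(a)x'\xi_{\vphi},y'\xi_{\vphi}\rangle=\langle\Phi(a)\xi_{\vphi},x'^*y'\xi_{\vphi}\rangle .
\]
This reduces the problem to computing $\langle\Phi(a)\xi_{\vphi},z'\xi_{\vphi}\rangle$ for $z'\in M'$. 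Since $M'$ is spanned by its positive contractions, it suffices to take $0\le z'\le1$. Then $a\mapsto\langle\Phi(a)\xi_{\vphi},z'\xi_{\vphi}\rangle$ and $a\mapsto\langle\Phi(a)\xi_{\vphi},(1-z')\xi_{\vphi}\rangle$ are positive functionals summing to $\psi=\vphi$. Each is dominated by $\vphi$, so by the Radon–Nikodym correspondence for states dominated by $\vphi$, each has the form $\langle\pi_{\vphi}(\cdot)\xi_{\vphi},w'\xi_{\vphi}\rangle$ for a unique positive $w'\in M'$.

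\emph{Step 4: identify the Radon–Nikodym derivative.} The key remaining step, and the one I expect to be the main obstacle, is to show that $w'=z'$. On $S$ this holds, since there $\Phi=\pi_{\vphi}$. To extend to $A$, I would try one of two arguments:
\begin{itemize}
\item a Kadison–Schwarz or multiplicative-domain argument showing that $\Phi$ is $M'$-bimodular in the appropriate sense;
\item perturbing $\vphi$ to $\vphi'=\vphi(z'\,\cdot)$-type states, namely $a\mapsto\langle\pi_{\vphi}(a)z'^{1/2}\xi_{\vphi},z'^{1/2}\xi_{\vphi}\rangle$, showing $\vphi$'s maximality transfers to these (for instance via convex combinations $\vphi=t\vphi_1+(1-t)\vphi_2$ and the tight structure), and then applying Step 1 to them.
\end{itemize}
Once $\langle\Phi(a)\xi_{\vphi},z'\xi_{\vphi}\rangle=\langle\pi_{\vphi}(a)\xi_{\vphi},z'\xi_{\vphi}\rangle$ holds for all $z'\in M'$, the density of $M'\xi_{\vphi}$ (because $\xi_{\vphi}$ is separating for $M$, hence cyclic for $M'$) yields $\Phi(a)\xi_{\vphi}=\pi_{\vphi}(a)\xi_{\vphi}$. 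Then $\Phi(a)x'\xi_{\vphi}=\pi_{\vphi}(a)x'\xi_{\vphi}$ for all $x' \in M'$, and density again gives $\Phi=\pi_{\vphi}$.
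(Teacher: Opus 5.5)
Your forward direction and Step~1 are correct, and Step~1 is exactly where tightness is needed. However, the converse is never completed, and the commutant-based continuation cannot complete it.

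Step~4 is not established. You only know that $a\mapsto\langle\Phi(a)\xi_\varphi,z'\xi_\varphi\rangle$ and $a\mapsto\langle\pi_\varphi(a)\xi_\varphi,z'\xi_\varphi\rangle$ agree on $S$, and a functional on $A$ is not determined by its restriction to $S$. Neither bulleted route is an argument. The second one presupposes that maximality of $\varphi$ passes to the states induced by $(z')^{1/2}\xi_\varphi$, which is essentially the theorem itself combined with Proposition~\ref{P:UTEPdirectsum}.

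More seriously, your concluding density step is false. $\xi_\varphi$ is cyclic for $M$, hence separating for $M'$, as you correctly say in Step~3. But it is in general \emph{not} separating for $M$, so $M'\xi_\varphi$ need not be dense. For a pure state, $M'=\mathbb{C}1$ and Steps~3--4 reduce to Step~1. Your final step would then deduce $\Phi(a)\xi_\varphi=\pi_\varphi(a)\xi_\varphi$ from $\langle\Phi(a)\xi_\varphi,\xi_\varphi\rangle=\varphi(a)$ via ``density'' of $\mathbb{C}\xi_\varphi$, which is invalid. So $M'$ is the wrong device for moving information away from the cyclic vector.

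The missing idea is that Step~1 already forces $\Phi=\pi_\varphi$, using the multiplicative structure generated by $S$ rather than the commutant.
\begin{enumerate}
\item Take a Stinespring dilation $\Phi=V^*\rho V$ on $K$. Step~1 gives $\langle\rho(a)V\xi_\varphi,\rho(b)V\xi_\varphi\rangle=\varphi(b^*a)$.
\item Hence $W\pi_\varphi(a)\xi_\varphi:=\rho(a)V\xi_\varphi$ defines an isometry $W:H_\varphi\to K$ with $W\pi_\varphi(a)=\rho(a)W$ and $W\xi_\varphi=V\xi_\varphi$.
\item Suppose $W\eta=V\eta$ and $s\in S$. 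Then $y=\rho(s)V\eta=W\pi_\varphi(s)\eta$ satisfies $V^*y=\Phi(s)\eta=\pi_\varphi(s)\eta$ and $\|y\|=\|\pi_\varphi(s)\eta\|$.
\item Equality in $\|V^*y\|\le\|y\|$ forces $y=VV^*y$, that is, $W\pi_\varphi(s)\eta=V\pi_\varphi(s)\eta$.
\item Inductively $W=V$ on the vectors $\pi_\varphi(s_1\cdots s_k)\xi_\varphi$. These are total because $C^*(S)=A$, so $V=W$ and $\Phi(a)=W^*\rho(a)W=\pi_\varphi(a)$.
\end{enumerate}
This is the paper's argument. Tightness enters only through Step~1, and your Steps~2--4 become unnecessary.
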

	
	\begin{proof}
		Assume that $\pi_{\vphi}$ has the unique tight extension property. Let $\psi \in \mE(A)$ such that $\vphi \ps \psi$. Then by definition, there exists a UCP map $\Phi: A \to \pi_{\vphi}(A)''$ such that $\Phi(s)= \pi_{\vphi}(s)$ for all $s \in S$ and 
		\[
		\psi(a)= \langle \Phi(a)\xi_{\vphi}, \xi_{\vphi} \rangle, \qquad a \in A.
		\]
		Then $\Phi= \pi_{\vphi}$ because of the unique tight extension property of $\pi_{\vphi}$. Hence for all $a \in A$,
		\[
		\psi(a)= \langle \Phi(a)\xi_{\vphi}, \xi_{\vphi} \rangle= \langle \pi_{\vphi}(a)\xi_{\vphi}, \xi_{\vphi} \rangle = \vphi(a).
		\]
		Hence $\vphi$ is maximal in the strong dilation relation. \par 
		Conversely, assume that $\vphi$ is maximal in the strong dilation relation and let $\Phi: A \to \pi_{\vphi}(A)''$ be a UCP map with $\Phi(s)= \pi_{\vphi}(s)$ for all $s \in S$. For all $a \in A$, define $\psi(a)=\langle \Phi(a)\xi_{\vphi},\xi_{\vphi}\rangle$. Then $\psi \in \mE(A)$ and $\vphi \ps \psi$. By maximality of $\vphi$, we have $\psi= \vphi$. Let $(\rho,K,V)$ be the minimal Stinespring representation of $\Phi$, so that
		\[
		\Phi(a)=V^*\rho(a)V,\qquad a\in A.
		\]
		Since $\psi=\varphi$, we have
		\[
		\langle \rho(a)V\xi_{\vphi},\rho(b)V\xi_{\vphi}\rangle =  \langle \pi_{\vphi}(a)\xi_{\vphi},\pi_{\vphi}(b)\xi_{\vphi}\rangle,\qquad a,b\in A.
		\]
		Hence $W : H_{\vphi} \to K$, defined by the formula $W\pi_{\vphi}(a)\xi_{\vphi}= \rho(a)V\xi_{\vphi}$ on the dense subspace $\pi_{\vphi}(A)\xi_{\vphi}$, is a well-defined isometry. Moreover, $W\pi_{\vphi}(a)=\rho(a)W$ for every $a\in A$. \par 
		We claim that $V=W$. Let $\mathcal N=\{a\in A:V\pi_{\vphi}(a)\xi_{\vphi}=W\pi_{\vphi}(a)\xi_{\vphi}\}$. Then $\mathcal N$ is a norm-closed linear subspace of $A$ containing $1$. If $s\in S$, then
		\[
		V^*\rho(s)V=\Phi(s)=\pi_{\vphi}(s).
		\]
		Thus, with $P=VV^*$,
		\[
		P W\pi_{\vphi}(s)\xi_{\vphi}
		=P\rho(s)V\xi_{\vphi}
		=VV^*\rho(s)V\xi_{\vphi}
		=V\pi_{\vphi}(s)\xi_{\vphi}.
		\]
		Since both $W$ and $V$ are isometries, we have
		\[
		\|W\pi_{\vphi}(s)\xi_{\vphi}\|=\|\pi_{\vphi}(s)\xi_{\vphi}\|=\|V\pi_{\vphi}(s)\xi_{\vphi}\|,
		\]
		so $W\pi_{\vphi}(s)\xi_{\vphi}=V\pi_{\vphi}(s)\xi_{\vphi}$. Hence $S\subset \mathcal N$. Now let $a\in\mathcal N$ and $s\in S$. Then
		\[
		W\pi_{\vphi}(sa)\xi_{\vphi}=\rho(s)W\pi_{\vphi}(a)\xi_{\vphi}=\rho(s)V\pi_{\vphi}(a)\xi_{\vphi}.
		\]
		Again,
		\[
		P W\pi_{\vphi}(sa)\xi_{\vphi}
		=VV^*\rho(s)V\pi_{\vphi}(a)\xi_{\vphi}
		=V\pi_{\vphi}(s)\pi_{\vphi}(a)\xi_{\vphi}
		=V\pi_{\vphi}(sa)\xi_{\vphi}.
		\]
		Since $V$ and $W$ are isometries,
		\[
		\|W\pi_{\vphi}(sa)\xi_{\vphi}\|=\|\pi_{\vphi}(sa)\xi_{\vphi}\|=\|V\pi_{\vphi}(sa)\xi_{\vphi}\|,
		\]
		and therefore $W\pi_{\vphi}(sa)\xi_{\vphi}=V\pi_{\vphi}(sa)\xi_{\vphi}$ and hence $sa\in\mathcal N$. Therefore $\mathcal N$ contains the unital algebra generated by $S$.
		Since $\C(S)=A$, we obtain $\mathcal N=A$. Hence $V=W$ on the dense
		subspace $\pi_{\vphi}(A)\xi_{\vphi}$, and consequently $V=W$ on $H_{\vphi}$. It follows that
		\[
		\Phi(a)=V^*\rho(a)V=W^*\rho(a)W=W^*W\pi_{\vphi}(a)=\pi_{\vphi}(a),
		\]
		for all $a \in A$. Thus $\Phi=\pi_{\vphi}$, and $\pi_{\vphi}$ has the unique tight extension property.
	\end{proof}
	
	The following result shows that, for pure states, maximality with respect to the dilation order is equivalent to maximality with respect to the strong dilation relation.
	
	\begin{proposition}
		Let $S$ be an operator system in the unital $\C$-algebra $A$ such that $\C(S)=A$, and let $\vphi \in \mE(A)$ be a pure state with GNS representation $(\pi_{\vphi}, H_{\vphi}, \xi_{\vphi})$. Then the following are equivalent.
		\begin{enumerate}
			\item $\vphi$ is maximal in $\pd$.
			\item $\vphi$ is maximal in $\ps$.
			\item $\pi_{\vphi}$ is a boundary representation for $S$.
		\end{enumerate}
	\end{proposition}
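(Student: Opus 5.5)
The plan is to prove the cycle $(1)\Leftrightarrow(3)$ and $(2)\Leftrightarrow(3)$, using Theorem~\ref{T:dilationUEP} and Theorem~\ref{T:UTEPmaximality} to translate maximality into extension properties. The only genuinely new input is the irreducibility of $\pi_{\vphi}$, which makes the tight and ordinary extension properties coincide.

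\textbf{Step 1: $(1)\Leftrightarrow(3)$.} By Theorem~\ref{T:dilationUEP}, $\vphi$ is maximal in $\pd$ if and only if $\pi_{\vphi}$ has the unique extension property with respect to $S$. Since $\vphi$ is pure, $\pi_{\vphi}$ is irreducible. An irreducible representation with the unique extension property is by definition a boundary representation, so this gives $(1)\Leftrightarrow(3)$ directly.

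\textbf{Step 2: $(2)\Leftrightarrow(3)$.} By Theorem~\ref{T:UTEPmaximality}, $\vphi$ is maximal in $\ps$ if and only if $\pi_{\vphi}$ has the unique tight extension property. It therefore suffices to show that, for the irreducible representation $\pi_{\vphi}$, the unique tight extension property coincides with the unique extension property. Irreducibility gives $\pi_{\vphi}(A)'=\mathbb C 1$, so by the double commutant theorem
\[
\pi_{\vphi}(A)''=B(H_{\vphi}).
\]
Hence the UCP maps $\Phi:A\to\pi_{\vphi}(A)''$ are exactly the UCP maps $\Phi:A\to B(H_{\vphi})$, and the two uniqueness conditions are literally the same statement.

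\textbf{Main obstacle.} There is essentially none. The only point requiring care is to invoke the correct facts: purity of $\vphi$ is equivalent to irreducibility of $\pi_{\vphi}$, and the commutant of an irreducible representation is $\mathbb C 1$ (Schur's lemma for $C^*$-algebras). Both are standard, and the rest is bookkeeping through the two earlier theorems.

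Alternatively, one could prove $(2)\Leftrightarrow(1)$ directly, by observing that $\ps$ and $\pd$ coincide as relations starting from $\vphi$ once $\pi_{\vphi}(A)''=B(H_{\vphi})$. The route through (3) is cleaner.
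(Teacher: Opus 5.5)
Your proof is correct and is essentially the paper's argument: irreducibility gives $\pi_{\vphi}(A)''=B(H_{\vphi})$, and Theorem~\ref{T:dilationUEP} yields $(1)\Leftrightarrow(3)$. The one difference is that the paper proves $(1)\Leftrightarrow(2)$ directly, since $\vphi\pd\psi$ and $\vphi\ps\psi$ coincide once $\pi_{\vphi}(A)''=B(H_{\vphi})$ (your stated alternative); this avoids Theorem~\ref{T:UTEPmaximality}, so it is actually the lighter of the two routes.
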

	
	\begin{proof}
		Since $\vphi$ is pure, $\pi_{\vphi}$ is an irreducible representation. Hence $\pi_{\vphi}(A)'' = B(H_{\vphi})$. Therefore, maximality of $\vphi$ in $\pd$ is equivalent to maximality in $\ps$, proving $(1)\iff(2)$.
		
		By Theorem~\ref{T:dilationUEP}, $\vphi$ is maximal in $\pd$ if and only if $\pi_{\vphi}$ has the unique extension property. Since $\pi_{\vphi}$ is irreducible, this precisely means that $\pi_{\vphi}$ is a boundary representation for $S$. Hence $(1)\iff(3)$.
	\end{proof}

	\section{A state-theoretic amendment of Arveson's Hyperrigidity Conjecture}\label{S:ISD}
	
	Let $S$ be an operator system in the C*-algebra $A$ such that $\C(S)=A$. Then the state space $\mE(A)$ equipped with the weak-$*$ topology, is compact metrizable. For a Borel probability measure $\mu$ on $\mE(A)$, the \emph{barycenter} $\mb(\mu)\in \mE(A)$ is defined as
	\[
	\mb(\mu)(a)
	=
	\int_{\mE(A)} \alpha(a)\,d\mu(\alpha),
	\qquad a\in A.
	\]
	Let $\bar{\mu}$ denote the completion of $\mu$. Next we introduce a new relation defined on $\mE(A)$. 
	
	\begin{definition}[Integral subdivision relation]
		Let $\vphi, \psi \in \mE(A)$. We define the \emph{integral subdivision relation} $\isd$ on $\mE(A)$ by declaring that $\varphi\isd\psi$ if for every Borel measure $\mu$ with barycenter $\vphi$, there exists a $\bar{\mu}$-measurable field of states $\alpha \to \psi_{\alpha} \in \mE(A)$ such that,
		\begin{enumerate}
			\item $\alpha \pd \psi_{\alpha}$ for $\bar{\mu}$-almost every $\alpha \in \mE(A)$,
			\item for all $a \in A$, 
			\[
			\psi(a)= \int_{\mE(A)} \psi_{\alpha}(a)d\bar{\mu}(\alpha).
			\]
		\end{enumerate}
		Here the $\bar{\mu}$-measurability of the field means that for every $a \in A$, the scalar valued function $\alpha \mapsto \psi_{\alpha}(a)$ is $\bar{\mu}$-measurable. Finally, $\vphi \in \mE(A)$ is said to be \emph{maximal} in $\isd$ if
		\[
		\vphi \isd \psi \implies \psi = \vphi.
		\]
	\end{definition}
	The definition of the integral subdivision relation is motivated by a classical characterization of the Choquet order on the space of regular Borel probability measures on a compact metrizable convex set; see \cite[Chapter I.3]{alfsen2012compact}. This characterization was subsequently extended to the non-metrizable setting by Davidson and Kennedy in \cite{hyperrigidityforfunctions}. 
	
	Let $\vphi, \psi \in \mE(A)$ and $\vphi \isd \psi$. Let $\mu= \delta_{\vphi}$ be the Dirac measure at $\vphi$. Then $\mathfrak b(\mu)= \vphi$, and the definition of $\isd$ immediately implies that $\vphi \pd \psi$. Thus $\vphi \isd \psi$ implies $\vphi \pd \psi$.

	Next we prove two technical lemmas, before stating the main results of this section. 
	
	\begin{lemma}\label{L:finitebaricentriclifting}
		Let $\vphi\in \mE(A)$ be a state with the GNS representation $(\pi_{\vphi},H_{\vphi},\xi_{\vphi})$. Suppose that $\displaystyle \vphi=\sum_{i=1}^n \lambda_i\vphi_i$ is a finite convex subdivision of $\vphi$, where $\lambda_i>0$,
		$\displaystyle \sum_{i=1}^n\lambda_i=1$, and $\vphi_i\in \mE(A)$. Then for $1\leq i\leq n$,  there exist normal states $\gamma_i\in \mE(\pi_{\vphi}(A)'')$ such that $\gamma_i(\pi_{\vphi}(a))=\vphi_i(a)$ for all $a \in A$
		and
		\[
		\sum_{i=1}^n\lambda_i\gamma_i(T)
		=
		\langle T\xi_{\vphi},\xi_{\vphi}\rangle,
		\qquad T\in \pi_{\vphi}(A)''.
		\]
	\end{lemma}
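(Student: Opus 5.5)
The plan is to use the Radon--Nikodym theorem for positive functionals dominated by a state: each $\lambda_i\vphi_i$ is dominated by $\vphi$, so it is implemented by a positive contraction in the commutant $\pi_{\vphi}(A)'$. The resulting vector functionals will be the $\gamma_i$.

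Since $\lambda_i\vphi_i\le \vphi$ as positive functionals on $A$, for each $i$ I will define a sesquilinear form on the dense subspace $\pi_{\vphi}(A)\xi_{\vphi}$ by
\[
B_i(\pi_{\vphi}(a)\xi_{\vphi},\pi_{\vphi}(b)\xi_{\vphi})=\lambda_i\vphi_i(b^*a).
\]
Because $|\lambda_i\vphi_i(b^*a)|^2\le \lambda_i\vphi_i(a^*a)\,\lambda_i\vphi_i(b^*b)\le \vphi(a^*a)\vphi(b^*b)$ by Cauchy--Schwarz, $B_i$ is well defined and bounded by $1$. It therefore extends to the whole of $H_{\vphi}$ and is given by an operator $0\le h_i\le 1$ on $H_{\vphi}$. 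The identity $B_i(\pi_\vphi(c)x,y)=B_i(x,\pi_\vphi(c^*)y)$ shows that $h_i\in\pi_{\vphi}(A)'$. This is the standard Radon--Nikodym lemma from the GNS theory, for instance in \cite{murphy}. Moreover $\sum_i h_i=1$, because $\sum_i B_i$ is exactly the inner product.

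Next I set $\gamma_i(T)=\lambda_i^{-1}\langle T h_i^{1/2}\xi_{\vphi},h_i^{1/2}\xi_{\vphi}\rangle$ for $T\in\pi_{\vphi}(A)''$. As a vector functional, $\gamma_i$ is normal and positive. Since $h_i^{1/2}$ lies in the commutant, for every $T\in\pi_{\vphi}(A)''$ we have
\[
\gamma_i(T)=\lambda_i^{-1}\langle Th_i\xi_{\vphi},\xi_{\vphi}\rangle .
\]
Taking $T=\pi_{\vphi}(a)$ gives $\gamma_i(\pi_{\vphi}(a))=\lambda_i^{-1}B_i(\pi_{\vphi}(a)\xi_{\vphi},\xi_{\vphi})=\vphi_i(a)$. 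In particular $\gamma_i(1)=\vphi_i(1)=1$, so $\gamma_i$ is a normal state. Summing and using $\sum_i h_i=1$ yields $\sum_i\lambda_i\gamma_i(T)=\langle T\xi_{\vphi},\xi_{\vphi}\rangle$ for all $T\in\pi_{\vphi}(A)''$.

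The only delicate point is the step $\langle Th_i^{1/2}\xi,h_i^{1/2}\xi\rangle=\langle Th_i\xi,\xi\rangle$ for $T$ in the bicommutant rather than only in $\pi_{\vphi}(A)$. This holds because $h_i^{1/2}\in\pi_{\vphi}(A)'$ commutes with $\pi_{\vphi}(A)''$, and that is precisely why the weights are placed in the commutant. Everything else is routine.
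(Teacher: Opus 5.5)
Your proof is correct, but it follows a genuinely different route from the paper's. The paper first notes that each $\vphi_i$ is absolutely continuous with respect to $\vphi$. It then invokes an external absolute-continuity decomposition (\cite[Proposition~4.2]{boundaryprojection}) to write $\vphi_i$ as a countable convex combination of vector states $\langle \pi_{\vphi}(\cdot)e_{i,k},e_{i,k}\rangle$, with $\{e_{i,k}\}$ orthonormal in $H_{\vphi}$. It defines $\gamma_i$ as the same combination of vector states on $\pi_{\vphi}(A)''$. Finally, it gets the identity $\sum_i\lambda_i\gamma_i(T)=\langle T\xi_{\vphi},\xi_{\vphi}\rangle$ by checking it on $\pi_{\vphi}(A)$ and extending through normality and weak-$*$ density of $\pi_{\vphi}(A)$ in $\pi_{\vphi}(A)''$.

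You instead use the domination $\lambda_i\vphi_i\le\vphi$ and the elementary Radon--Nikodym lemma. This gives positive contractions $h_i\in\pi_{\vphi}(A)'$ with $\sum_i h_i=1$. Each $\gamma_i$ is then a single vector state $\lambda_i^{-1}\langle T h_i^{1/2}\xi_{\vphi},h_i^{1/2}\xi_{\vphi}\rangle$, and the barycenter identity holds on all of $\pi_{\vphi}(A)''$ by commutation alone, with no density argument. All your verifications (well-definedness via Cauchy--Schwarz, $h_i\in\pi_{\vphi}(A)'$, $\sum_i h_i=1$ on a dense subspace, and $\gamma_i\circ\pi_{\vphi}=\vphi_i$) check out.

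What each route buys: yours is self-contained and shorter, and it does not depend on the cited decomposition theorem. The paper's route fits the absolute-continuity framework and would also cover states that are merely absolutely continuous with respect to $\vphi$ without being dominated by a multiple of it. That extra generality is not needed here, since the convex subdivision supplies the domination. Since a normal state on $\pi_{\vphi}(A)''$ is determined by its restriction to the weak-$*$ dense subalgebra $\pi_{\vphi}(A)$, the two constructions in fact produce the same $\gamma_i$.
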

	
	\begin{proof}
		Let $x \in A^{**}$ such that $\vphi(x^*x)=0$. Then $\lambda_i\vphi_i(x^*x)=0$ and hence $\vphi_i(x^*x)=0$ for all $1 \leq i \leq n$. This implies that $\vphi_i$ is absolutely continuous with respect to $\vphi$ (see \cite[Section 2.2]{clouatreTimko} for the definition of absolute continuity of states) for each $i$. By the absolute-continuity decomposition for states
		\cite[Proposition~4.2]{boundaryprojection}, for each $i$ there exist an orthonormal set $\{e_{i,k}:k\geq 1\}\subseteq H_{\vphi}$ and positive numbers
		$\{r_{i,k}:k\geq 1\}$ with $\displaystyle \sum_{k=1}^\infty r_{i,k}=1$
		such that
		\[
		\vphi_i(a)
		=
		\sum_{k=1}^\infty r_{i,k}
		\langle \pi_{\vphi}(a)e_{i,k},e_{i,k}\rangle,
		\qquad a\in A.
		\]
		Define $\gamma_i(T)=\displaystyle \sum_{k=1}^\infty r_{i,k} \langle Te_{i,k},e_{i,k}\rangle$ for all $T\in \pi_{\vphi}(A)''$. Then $\gamma_i$ is a normal state on $\pi_{\vphi}(A)''$, being a norm-convergent convex combination of normal vector states. Moreover,
		\[
		\gamma_i(\pi_{\vphi}(a))
		=
		\sum_{k=1}^\infty r_{i,k}
		\langle \pi_{\vphi}(a)e_{i,k},e_{i,k}\rangle
		=
		\vphi_i(a),
		\qquad a\in A.
		\]
		
		It remains to verify the barycenter identity on $\pi_{\vphi}(A)''$. For $a\in A$,
		\[
		\sum_{i=1}^n\lambda_i\gamma_i(\pi_{\vphi}(a))
		=
		\sum_{i=1}^n\lambda_i\vphi_i(a)
		=
		\vphi(a)
		=
		\langle \pi_{\vphi}(a)\xi_{\vphi},\xi_{\vphi}\rangle.
		\]
		Thus the normal functionals
		\[
		\sum_{i=1}^n\lambda_i\gamma_i
		\quad\text{and}\quad
		T\mapsto \langle T\xi_{\vphi},\xi_{\vphi}\rangle
		\]
		agree on $\pi_{\vphi}(A)$. Since $\pi_{\vphi}(A)$ is weak-$*$ dense in
		$\pi_{\vphi}(A)''$, and both functionals are normal, they agree on all of $\pi_{\vphi}(A)''$.
		Hence
		\[
		\sum_{i=1}^n\lambda_i\gamma_i(T)
		=
		\langle T\xi_{\vphi},\xi_{\vphi}\rangle,
		\qquad T\in \pi_{\vphi}(A)''.
		\]
		This proves the lemma.
	\end{proof}

	\begin{lemma} \label{L:lifting}
		Let $\vphi\in \mE(A)$, and let $(\pi_{\vphi},H_{\vphi},\xi_{\vphi})$ be the GNS representation of $\vphi$. Let $\mu$ be a Borel probability measure on $\mE(A)$ such that $ \mb(\mu)=\vphi$. Then there exists a family of states $\{\gamma_\alpha\}_{\alpha\in \mE(A)}\subseteq \mE(\pi_{\vphi}(A)'')$ such that
		\begin{enumerate}
			\item\label{i:1} for every $T\in \pi_{\vphi}(A)''$, the  function $\alpha\longmapsto \gamma_\alpha(T)$ is $\bar\mu$-measurable;
			
			\item\label{i:2} there exists a $\bar\mu$-measurable set $X\subseteq \mE(A)$ with $\bar\mu(X)=1$ such that
			\[
			\gamma_\alpha(\pi_{\vphi}(a))=\alpha(a),
			\qquad a\in A,
			\]
			for every $\alpha\in X$;
			
			\item\label{i:3} for every $T\in \pi_{\vphi}(A)''$,
			\[
			\int_{\mE(A)}\gamma_\alpha(T)\,d\bar\mu(\alpha)
			=
			\langle T\xi_{\vphi},\xi_{\vphi}\rangle.
			\]
		\end{enumerate}
	\end{lemma}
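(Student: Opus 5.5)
Our plan is to build the family $\gamma_\alpha$ by disintegrating $\mu$ through a Radon--Nikodym type map into the commutant, and then to make the choice pointwise using the lifting $\ell$ of Subsection~\ref{SS:Lifting}. The basic tool is the standard barycentric correspondence. For $f\in L^\infty(\mE(A),\mu)$ with $0\le f\le 1$, the functional $a\mapsto\int f(\alpha)\alpha(a)\,d\mu(\alpha)$ is dominated by $\vphi$. Hence there is a unique $\kappa(f)\in\pi_{\vphi}(A)'$ with $0\le\kappa(f)\le 1$ and
\[
\int_{\mE(A)} f(\alpha)\,\alpha(a)\,d\mu(\alpha)=\langle \pi_{\vphi}(a)\kappa(f)\xi_{\vphi},\xi_{\vphi}\rangle,\qquad a\in A.
\]
Extending linearly, $\kappa:L^\infty(\mu)\to\pi_{\vphi}(A)'$ is a unital positive map, normal in the sense that it is weak-$*$ to weak-$*$ continuous. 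It is moreover completely positive, since its domain is commutative.

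We then define, for $T\in\pi_{\vphi}(A)''$, the functional $f\mapsto\langle T\kappa(f)\xi_{\vphi},\xi_{\vphi}\rangle$ on $L^1$-type data. Since $T$ and $\kappa(f)$ commute, positivity follows for $T\ge0$: indeed $T\kappa(f)=T^{1/2}\kappa(f)T^{1/2}\ge0$. This functional is normal on $L^\infty(\mu)$, so it is represented by an $L^1(\mu)$ density $h_T$ with $\int f h_T\,d\mu=\langle T\kappa(f)\xi_{\vphi},\xi_{\vphi}\rangle$. Since $|\langle T\kappa(f)\xi_{\vphi},\xi_{\vphi}\rangle|\le\|T\|\langle\kappa(|f|)\xi_{\vphi},\xi_{\vphi}\rangle=\|T\|\int|f|\,d\mu$, we get $\|h_T\|_\infty\le\|T\|$. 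Thus $T\mapsto[h_T]$ is a unital positive linear map $\Theta:\pi_{\vphi}(A)''\to L^\infty(\mu)$. By construction it satisfies
\[
\Theta(\pi_{\vphi}(a))=[\alpha\mapsto\alpha(a)]\qquad\text{and}\qquad \int\Theta(T)\,d\mu=\langle T\xi_{\vphi},\xi_{\vphi}\rangle,
\]
the latter by taking $f=1$.

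Next we compose with the lifting. Since $\ell:L^\infty(\mu)\to\mathscr L^\infty(\mE(A),\mathcal A_\mu)$ is a unital $*$-homomorphism, the map $\ell\circ\Theta$ is unital and positive. For each $\alpha$ we set $\gamma_\alpha(T)=\ell(\Theta(T))(\alpha)$. Then $\gamma_\alpha$ is a state on $\pi_{\vphi}(A)''$, because evaluation at a point is a character on $\mathscr L^\infty$. Property \eqref{i:1} holds because each $\ell(\cdot)$ is $\mathcal A_\mu$-measurable. Property \eqref{i:3} follows from $\ell(g)=g$ $\bar\mu$-a.e. together with the barycenter identity above.

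For \eqref{i:2}, the point-evaluation functions only satisfy $\ell([\hat a])=\hat a$ almost everywhere, with a null set depending on $a$. We intend to use separability of $A$. Fix a countable dense subset $\{a_n\}$ and intersect the countably many conull sets; call the result $X$. For $\alpha\in X$ both sides of $\gamma_\alpha(\pi_{\vphi}(a))=\alpha(a)$ are contractive linear in $a$, so the identity passes from $\{a_n\}$ to all of $A$ by continuity.

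We expect the main obstacle to be the construction of $\kappa$ and the verification that $\Theta$ is well defined and positive on all of $\pi_{\vphi}(A)''$, rather than just on $\pi_{\vphi}(A)$. This requires the commutation of $\kappa(f)\in\pi_{\vphi}(A)'$ with $T$, and the normality estimate yielding the $L^\infty$ bound. The domination step uses the standard fact that a positive functional dominated by $\vphi$ arises from a unique positive contraction in $\pi_{\vphi}(A)'$.
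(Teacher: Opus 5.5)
Your argument is correct, but it takes a genuinely different route from the paper.

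\textbf{The paper's route.} It discretizes. It takes finite Borel partitions $\mathcal P_n$ of $\mE(A)$ of mesh $<1/n$ and applies Lemma~\ref{L:finitebaricentriclifting} to the induced finite convex subdivisions of $\vphi$; that lemma rests on the absolute-continuity decomposition of states. This produces piecewise-constant UCP maps $\Gamma_n:\pi_{\vphi}(A)''\to L^\infty(\mE(A),\bar\mu)$. It then shows $\Gamma_n(\pi_{\vphi}(a))\to\widehat{a}$ uniformly, using uniform continuity of $\widehat{a}$, and extracts a point-weak-$*$ cluster point $\Gamma$ by compactness. Only the final lifting and separability steps coincide with yours.

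\textbf{Your route.} You work directly with the barycentric map $\kappa:L^\infty(\mu)\to\pi_{\vphi}(A)'$ familiar from the theory of orthogonal measures. You define $\Theta$ as its transpose against the vector state, via $\int f\,\Theta(T)\,d\mu=\langle T\kappa(f)\xi_{\vphi},\xi_{\vphi}\rangle$ for all $f\in L^\infty(\mu)$; item (3) is the case $f=1$.

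\textbf{What each approach buys.} Yours removes the partitions, the uniform-continuity estimate, Lemma~\ref{L:finitebaricentriclifting} and the subnet argument. It also yields a canonical map that is normal and compatible with $\kappa$. The paper's $\Gamma$ depends on the choice of the $\gamma_{n,j}$ and of a subnet, and is not obviously normal. The two schemes are related: choosing $\gamma_{n,j}(T)=t_{n,j}^{-1}\langle T\kappa(\mathbf 1_{B_{n,j}})\xi_{\vphi},\xi_{\vphi}\rangle$ (for $t_{n,j}>0$) in the paper's construction makes $\Gamma_n(T)$ exactly the conditional expectation of your $\Theta(T)$ onto the $\sigma$-algebra generated by $\mathcal P_n$. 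The paper's route, in turn, reuses the finite lemma already proved and needs no Radon--Nikodym step on $L^\infty(\mu)$, at the cost of a limiting argument.

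\textbf{A step to justify.} You should prove, not just assert, the estimate $|\langle T\kappa(f)\xi_{\vphi},\xi_{\vphi}\rangle|\le\|T\|\int|f|\,d\mu$. The simplest way is as follows.
\begin{enumerate}
\item For $T\ge0$ and $f\ge0$, commutation gives $0\le T\kappa(f)=\kappa(f)^{1/2}T\kappa(f)^{1/2}\le\|T\|\kappa(f)$.
\item Hence the functional $f\mapsto\langle T\kappa(f)\xi_{\vphi},\xi_{\vphi}\rangle$ is dominated by $\|T\|\mu$, so $0\le h_T\le\|T\|$. This domination also gives normality of the functional without appealing to normality of $\kappa$.
\item Extend to all $T$ by linearity. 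Contractivity of $\Theta$ is then automatic, since a unital positive map into a commutative $C^*$-algebra is contractive.
\end{enumerate}
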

	
	\begin{proof}
		Let $(a_m)_{m=1}^\infty$ be a fixed countable dense subset of the unit ball of $A$. Then the state space $\mE(A)$, equipped with the weak-$*$ topology is compact metrizable with respect to the metric 
		\[
		d(\alpha, \beta)= \sum_{m=1}^{\infty} \frac{1}{2^m}| \alpha(a_m)-\beta(a_m)|.
		\]
		For each $n \in \mathbb{N}$, let $\mathcal P_n=\{B_{n,1},\dots,B_{n,k_n}\}$ be a finite disjoint Borel partition of $\mE(A)$ such that  $\operatorname{diam}(B_{n,j})<\frac1n$ in the mentioned metric. For each $j$ with $t_{n,j}:=\mu(B_{n,j})>0$, define a state $\alpha_{n,j}\in \mE(A)$ by
		\[
		\alpha_{n,j}(a)
		=
		\frac{1}{t_{n,j}}
		\int_{B_{n,j}}\alpha(a)\,d\mu(\alpha),
		\qquad a\in A.
		\]
		When $\mu(B_{n,j})=0$, choose arbitrary states $\alpha_{n,j}\in \mE(A)$.  Then for every $a\in A$ we have,
		\[
		\sum_{j=1}^{k_n}t_{n,j}\alpha_{n,j}(a)
		=
		\sum_{j=1}^{k_n}\int_{B_{n,j}}\alpha(a)\,d\mu(\alpha)  
		=
		\int_{\mE(A)}\alpha(a)\,d\mu(\alpha) 
		=
		\vphi(a).
		\]
		Hence we have a finite convex subdivision $\displaystyle \vphi=\sum_{j=1}^{k_n}t_{n,j}\alpha_{n,j}$. By Lemma \ref{L:finitebaricentriclifting}, for each $n$ and each $1 \leq j \leq k_n$ there exists a normal state $\gamma_{n,j}\in \mE(\pi_{\vphi}(A)'')$ such that $\gamma_{n,j}(\pi_{\vphi}(a))=\alpha_{n,j}(a)$ for all $a \in A$ and
		\[
		\sum_{j=1}^{k_n}t_{n,j}\gamma_{n,j}(T)
		=
		\langle T\xi_{\vphi},\xi_{\vphi}\rangle,
		\qquad  T\in \pi_{\vphi}(A)''.
		\]
		Let $\Gamma_n: \pi_{\vphi}(A)''\to L^\infty(\mE(A),\bar\mu)$ be the map defined as 
		
		\[
		\Gamma_n(T) =\sum_{j=1}^{k_n}\gamma_{n,j}(T)\mathbf 1_{B_{n,j}}, \qquad T \in \pi_{\vphi}(A)''
		\]
		where $\mathbf{1}_{B_{n,j}}$ is the characteristic function of $B_{n,j}$. The map $\Gamma_n$ is unital and positive, because each $\gamma_{n,j}$ is a state. Since the codomain algebra $L^\infty(\mE(A),\bar\mu)$ is commutative, $\Gamma_n$ is completely positive. Thus $\Gamma_n:\pi_{\vphi}(A)''\to L^\infty(\mE(A),\bar\mu)$ is UCP. Moreover, for every $T\in \pi_{\vphi}(A)''$,
		\[
		\begin{aligned}
			\int_{\mE(A)}\Gamma_n(T)\,d\bar\mu
			=
			\sum_{j=1}^{k_n}t_{n,j}\gamma_{n,j}(T) 
			=
			\langle T\xi_{\vphi},\xi_{\vphi}\rangle.
		\end{aligned}
		\]

		Now fix $a\in A$, and let $\widehat{a}\colon \mE(A)\to\mathbb{C}$ be the
		evaluation function defined by $\widehat{a}(\alpha)=\alpha(a)$ for all $\alpha \in \mE(A)$. Since $\widehat{a}$ is continuous on the compact metric space $\mE(A)$,
		it is uniformly continuous. Hence, for every $\varepsilon>0$, there
		exists $N_\varepsilon\in\mathbb{N}$ such that, for every $n\geq N_\varepsilon$ and every
		$\alpha,\beta\in \mE(A)$ satisfying $d(\alpha,\beta)<1/n$, we have
		\[
		\left|\widehat{a}(\alpha)-\widehat{a}(\beta)\right|<\varepsilon.
		\]
		
		Fix $n\geq N_\varepsilon$, and let $1\leq j\leq k_n$ be such that $t_{n,j}>0$.
		For every $\alpha\in B_{n,j}$, we have
		\[
		\begin{aligned}
			\left|\Gamma_n(\pi_\varphi(a))(\alpha)-\widehat{a}(\alpha)\right|
			&=
			\left|
			\frac{1}{t_{n,j}}
			\int_{B_{n,j}}\widehat{a}(\beta)\,d\mu(\beta)
			-\widehat{a}(\alpha)
			\right| \\
			&\leq
			\frac{1}{t_{n,j}}
			\int_{B_{n,j}}
			\left|\widehat{a}(\beta)-\widehat{a}(\alpha)\right|
			\,d\mu(\beta).
		\end{aligned}
		\]
		Since $\operatorname{diam}(B_{n,j})<1/n$, we have
		$d(\alpha,\beta)<1/n$ for all $\alpha,\beta\in B_{n,j}$. Therefore, $\left|\widehat{a}(\beta)-\widehat{a}(\alpha)\right|<\varepsilon$ throughout $B_{n,j}$, and hence
		\[
		\left|\Gamma_n(\pi_\varphi(a))(\alpha)-\widehat{a}(\alpha)\right|
		<\varepsilon.
		\]
		
		The union of those sets $B_{n,j}$ for which $t_{n,j}=0$ is
		$\overline{\mu}$-null. Thus the preceding estimate holds for
		$\overline{\mu}$-almost every $\alpha\in \mE(A)$. It follows that
		\[
		\left\| \Gamma_n(\pi_\varphi(a))-\widehat{a} \right\|_{L^\infty(\mE(A),\overline{\mu})} <\varepsilon
		\]
		for every $n\geq N_\varepsilon$. Therefore, $\Gamma_n(\pi_\varphi(a))$ converges to $\widehat{a}$ in the uniform norm for all $a \in A$.

		The set of all UCP maps from $\pi_\varphi(A)''$ to $L^\infty(\mE(A),\overline{\mu})$ is compact in the point weak-$^*$ topology. Hence there exist a cofinal subnet $(\Gamma_\lambda)_{\lambda\in\Lambda}$ of $(\Gamma_n)_{n\in\mathbb{N}}$, and a UCP map $\Gamma\colon \pi_\varphi(A)'' \longrightarrow L^\infty(\mE(A),\overline{\mu})$ such that $\Gamma_\lambda\to\Gamma$ in the point weak-$^*$ topology. Note that $(\Gamma_{\lambda}(\pi_\varphi(a)))_{\lambda \in \Lambda}$ is a subnet of $(\Gamma_n(\pi_\varphi(a)))_{n \in \mathbb N}$. Hence $\Gamma_{\lambda}(\pi_\varphi(a))$ converges to $\widehat{a}$ in norm and hence in the weak-$*$ topology. Therefor, by uniqueness of limits we have,
		\[
		\Gamma(\pi_\varphi(a))=\widehat{a}, \qquad a\in A.
		\]

		Moreover, integration against the constant function
		$1\in L^1(\mE(A),\overline{\mu})$ is weak-$*$ continuous 
		$L^\infty(\mE(A),\overline{\mu})$. Therefore, for every
		$T\in\pi_\varphi(A)''$,
		\[
		\begin{aligned}
			\int_{\mE(A)}\Gamma(T)\,d\overline{\mu} = \lim_{\lambda} \int_{\mE(A)}\Gamma_\lambda(T)\,d\overline{\mu}
			=
			\langle T\xi_\varphi,\xi_\varphi\rangle.
		\end{aligned}
		\]

		Let $\mathscr L^\infty(\mE(A),\bar\mu)$ denotes the bounded $\bar\mu$-measurable functions on $\mE(A)$. Then, as discussed in Subsection \ref{SS:Lifting}, there exists a unital $*$-homomorphism $\ell: L^{\infty}(\mE(A), \bar \mu) \to \mathscr L^\infty(\mE(A),\bar\mu)$ such that $\ell$ is unital, positive, and linear, and $\ell(f)$ is a representative of the class $f$.

		Let $\gamma_\alpha(T)= \ell(\Gamma(T))(\alpha)$ for $\alpha\in \mE(A)$ and $T\in \pi_{\vphi}(A)''$. For fixed $\alpha$, the map $T\longmapsto \gamma_\alpha(T)$ is linear, positive, and unital, because both $\Gamma$ and $\ell$ are linear, positive, and unital. Hence $\gamma_\alpha\in \mE(\pi_{\vphi}(A)'')$. For fixed $T\in \pi_{\vphi}(A)''$, the function $\alpha\longmapsto \gamma_\alpha(T)$ is $\bar\mu$-measurable, since it is the measurable function $\ell(\Gamma(T)) \in \mathscr L^\infty(\mE(A),\bar\mu)$. 
		
		Next recall that $\Gamma(\pi_{\vphi}(a))= \widehat{a}$ for all $a \in A$ and hence $\ell (\Gamma(\pi_{\vphi}(a)))= \widehat{a}$, $\bar{\mu}$-almost everywhere on $\mE(A)$. For all $m \in \mathbb{N}$, let $X_m = \{\alpha \in \mE(A): l (\Gamma(\pi_{\vphi}(a_m)))(\alpha)= \alpha(a_m)\}$ and let $X = \underset{m \in \mathbb{N}}{\bigcap} X_m$. Since $\bar{\mu}(X_m)=1$ for all $m$, $\bar{\mu}(X)= 1$ and for all $\alpha \in X$ and for all $m \in \mathbb{N}$,
		\[
		\gamma_{\alpha}(\pi_{\vphi}(a_m))=l (\Gamma(\pi_{\vphi}(a_m)))(\alpha)= \alpha(a_m).
		\]
		Since $(a_m)$ is norm dense on the unit ball, we have that $\gamma_{\alpha}(\pi_{\vphi}(a))= \alpha(a)$ for all $a \in A$ and for all $ \alpha \in X$.

		Finally, since $\ell(\Gamma(T))$ is a representative of the
		$L^\infty$-class $\Gamma(T)$, we have
		\[
		\begin{aligned}
			\int_{\mE(A)}\gamma_\alpha(T)\,d\bar\mu(\alpha)
			=
			\int_{\mE(A)}\ell(\Gamma(T))(\alpha)\,d\bar\mu(\alpha) 
			=
			\int_{\mE(A)}\Gamma(T)\,d\mu 
			&=
			\langle T\xi_{\vphi},\xi_{\vphi}\rangle,
		\end{aligned}
		\]
		for all $T \in \pi_{\vphi}(A)''$. This proves the lemma.
	\end{proof}

	Using the previous lemma, we now show that the strong dilation relation is stronger than the integral subdivision relation.

	\begin{theorem}\label{T:implication}
		Let $S$ be an operator system in a $\C$-algebra $A$ such that $\C(S)=A$. Let $\vphi,\psi\in \mE(A)$. If $\vphi\ps \psi$ then $\vphi\isd \psi$.
	\end{theorem}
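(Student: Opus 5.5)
Given $\vphi\ps\psi$, fix a UCP map $\Phi:A\to\pi_{\vphi}(A)''$ with $\Phi(s)=\pi_{\vphi}(s)$ for $s\in S$ and $\psi(a)=\langle\Phi(a)\xi_{\vphi},\xi_{\vphi}\rangle$. Let $\mu$ be any Borel probability measure on $\mE(A)$ with $\mb(\mu)=\vphi$. The plan is to disintegrate $\vphi$ along $\mu$ at the level of the von Neumann algebra $\pi_{\vphi}(A)''$ using Lemma~\ref{L:lifting}, and then to push $\Phi$ through the resulting family of normal-type states.

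\textbf{Step 1: the field of states.} Lemma~\ref{L:lifting} gives states $\gamma_\alpha\in\mE(\pi_{\vphi}(A)'')$ and a conull set $X$ satisfying conditions \eqref{i:1}--\eqref{i:3}. Define
\[
\psi_\alpha(a)=\gamma_\alpha(\Phi(a)),\qquad a\in A,\ \alpha\in\mE(A).
\]
Each $\psi_\alpha$ is a state, being the composition of a UCP map with a state. Applying \eqref{i:1} with $T=\Phi(a)$ shows that $\alpha\mapsto\psi_\alpha(a)$ is $\bar\mu$-measurable. Applying \eqref{i:3} with $T=\Phi(a)$ gives
\[
\int\psi_\alpha(a)\,d\bar\mu(\alpha)=\langle\Phi(a)\xi_{\vphi},\xi_{\vphi}\rangle=\psi(a),
\]
which is condition (2) in the definition of $\isd$.

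\textbf{Step 2: $\alpha\pd\psi_\alpha$ for $\alpha\in X$.} This is the main point. Fix $\alpha\in X$ and take a GNS triple $(\sigma_\alpha,K_\alpha,\eta_\alpha)$ of the state $\gamma_\alpha$ on the C$^*$-algebra $M=\pi_{\vphi}(A)''$. Set $\pi=\sigma_\alpha\circ\pi_{\vphi}$, which is a representation of $A$. By \eqref{i:2},
\[
\langle\pi(a)\eta_\alpha,\eta_\alpha\rangle=\gamma_\alpha(\pi_{\vphi}(a))=\alpha(a),
\]
so $(\pi,K_\alpha,\eta_\alpha)$ is a representation of the state $\alpha$. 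Now define the UCP map $\Psi=\sigma_\alpha\circ\Phi:A\to B(K_\alpha)$. For $s\in S$ we have $\Psi(s)=\sigma_\alpha(\pi_{\vphi}(s))=\pi(s)$, and
\[
\langle\Psi(a)\eta_\alpha,\eta_\alpha\rangle=\gamma_\alpha(\Phi(a))=\psi_\alpha(a).
\]
Condition (3) of Proposition~\ref{P:equivalence} therefore holds, so $\alpha\pd\psi_\alpha$. This holds for every $\alpha\in X$, hence $\bar\mu$-almost everywhere.

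\textbf{Obstacle.} The only delicate issue is that $\gamma_\alpha$ need not be normal, so its GNS representation is not a representation of $\pi_{\vphi}(A)''$ as a von Neumann algebra. This is harmless: the argument only uses that $\sigma_\alpha$ is a $*$-representation of the C$^*$-algebra $M$, and $\Phi$ takes values in $M$. This is exactly where the tightness hypothesis $\Phi(A)\subseteq\pi_{\vphi}(A)''$ is essential, since $\sigma_\alpha$ is not defined on all of $B(H_{\vphi})$. Measurability of the field and the integral identity are inherited directly from Lemma~\ref{L:lifting}.
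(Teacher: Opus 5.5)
Your proposal is correct and follows the paper's own proof essentially step for step. You set $\psi_\alpha=\gamma_\alpha\circ\Phi$ using the family from Lemma~\ref{L:lifting}, take measurability and the barycentric identity from (\ref{i:1}) and (\ref{i:3}), and get $\alpha\pd\psi_\alpha$ on the conull set $X$ by composing the GNS representation of $\gamma_\alpha$ on $\pi_{\vphi}(A)''$ with $\pi_{\vphi}$ and with $\Phi$, then applying Proposition~\ref{P:equivalence}; your observation that normality of $\gamma_\alpha$ is not needed, whereas $\Phi(A)\subseteq\pi_{\vphi}(A)''$ is, is exactly what makes this work.
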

	
	\begin{proof}
		Assume that $\vphi\ps \psi$ and let $(\pi_{\vphi},H_{\vphi},\xi_{\vphi})$ be the GNS representation of $\vphi$. By the definition of the strong dilation relation, there exists a unital completely positive map $\Phi:A\to \pi_{\vphi}(A)''$ such that $\Phi(s)=\pi_{\vphi}(s)$ for all $s \in S$ and $\psi(a)=\langle \Phi(a)\xi_{\vphi},\xi_{\vphi}\rangle$ for all $a \in A$. 
		
		Let $\mu$ be a regular Borel probability measure on $\mE(A)$ such that $\mb(\mu)=\vphi$. By Lemma \ref{L:lifting}, there exists a family of states $\{\gamma_\alpha\}_{\alpha\in \mE(A)}\subseteq \mE(\pi_{\vphi}(A)'')$ along with $X \subset \mE(A)$ with $\bar{\mu}(X)=1$ satisfying conditions (\ref{i:1}), (\ref{i:2}) and (\ref{i:3}) of Lemma \ref{L:lifting}.  For each $\alpha\in \mE(A)$, define a state $\psi_\alpha\in \mE(A)$ by $\psi_\alpha(a)=\gamma_\alpha(\Phi(a))$. 
		
		Since $\Phi(a) \in \pi_{\vphi}(A)''$, (\ref{i:1}) of Lemma \ref{L:lifting} says that the scalar functions $\alpha \mapsto \gamma_{\alpha}(\Phi(a))$ is $\bar{\mu}$-measurable for all $a \in A$. Hence $\alpha\mapsto \psi_\alpha$ is $\bar{\mu}$ measurable. Moreover by (\ref{i:3}), for every $a\in A$,
		\[
		\begin{aligned}
			\int_{\mE(A)}\psi_\alpha(a)\,d\bar\mu(\alpha)
			=
			\int_{\mE(A)}\gamma_\alpha(\Phi(a))\,d\bar\mu(\alpha) 
			=
			\langle \Phi(a)\xi_{\vphi},\xi_{\vphi}\rangle 
			=
			\psi(a).
		\end{aligned}
		\]

		Let $\alpha\in X$ and let $(\rho_\alpha,K_\alpha,\eta_\alpha)$ be the GNS representation of the state $\gamma_\alpha$ on the C*-algebra $\pi_{\vphi}(A)''$. Let $\sigma_\alpha:A\to B(K_\alpha)$ be defined as $\sigma_\alpha = \rho_{\alpha} \circ \pi_{\vphi}$. Then, for every $a\in A$,
		\[
		\begin{aligned}
			\langle \sigma_\alpha(a)\eta_\alpha,\eta_\alpha\rangle
			=
			\langle \rho_\alpha(\pi_{\vphi}(a))\eta_\alpha,\eta_\alpha\rangle 
			=
			\gamma_\alpha(\pi_{\vphi}(a)) 
			&=
			\alpha(a),
		\end{aligned}
		\]
		because $\alpha\in X$. Hence $(\sigma_\alpha,K_\alpha,\eta_\alpha)$ is a
		representation of the state $\alpha$. Let $\Theta_\alpha:A\to B(K_\alpha)$ be the UCP map defined as $\Theta_{\alpha}= \rho_\alpha \circ \Phi$. Then $\Theta_{\alpha}(s)= \sigma_{\alpha}(s)$ for all $s \in S$ and,
		\[
		\begin{aligned}
			\langle \Theta_\alpha(a)\eta_\alpha,\eta_\alpha\rangle
			=
			\langle \rho_\alpha(\Phi(a))\eta_\alpha,\eta_\alpha\rangle 
			=
			\gamma_\alpha(\Phi(a)) 
			&=
			\psi_\alpha(a).
		\end{aligned}
		\]
		for all $a \in A$. Then by Proposition \ref{P:equivalence} $\alpha\pd\psi_\alpha$ . Since this holds for every $\alpha\in X$, and since $\bar\mu(X)=1$, we have $\alpha\pd\psi_\alpha$ for $\bar\mu$--almost every $\alpha\in \mE(A)$. 
		Thus, for the arbitrary representing measure $\mu$ of $\vphi$, we have constructed a $\bar\mu$-measurable field of states $\alpha\longmapsto \psi_\alpha \in \mE(A)$ such that
		\[
		\alpha\pd\psi_\alpha
		\quad
		\bar\mu\text{-almost everywhere},
		\]
		and
		\[
		\psi(a)
		=
		\int_{\mE(A)}\psi_\alpha(a)\,d\bar\mu(\alpha),
		\qquad a\in A.
		\]
		This implies that $\vphi \isd \psi$. This proves the theorem.
	\end{proof}
	
	Next we state a straightforward consequence of Theorem \ref{T:implication}.
	\begin{corollary}\label{C:maximalitycompare}
		Let $S$ be an operator system contained in a $\C$-algebra $A$ such that $\C(S)=A$. Let $\vphi \in \mE(A)$ such that $\vphi$ is maximal in $\isd$. Then $\vphi$ is maximal in $\ps$. 
	\end{corollary}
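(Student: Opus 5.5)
This is a direct consequence of Theorem~\ref{T:implication}, which says that $\varphi\ps\psi$ implies $\varphi\isd\psi$. The plan is to unwind the definition of maximality for $\ps$ and pass through that implication.

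Fix $\vphi\in\mE(A)$ maximal in $\isd$, and let $\psi\in\mE(A)$ satisfy $\vphi\ps\psi$; we must show $\psi=\vphi$.

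\begin{enumerate}
\item By the definition of $\ps$, there is a UCP map $\Phi:A\to\pi_{\vphi}(A)''$ such that $\Phi_{|S}=\pi_{\vphi|S}$ and $\psi(a)=\langle\Phi(a)\xi_{\vphi},\xi_{\vphi}\rangle$ for all $a\in A$.
\item Theorem~\ref{T:implication} applies directly to this pair of states. For every Borel probability measure $\mu$ on $\mE(A)$ with $\mb(\mu)=\vphi$, it supplies a $\bar\mu$-measurable field $\alpha\mapsto\psi_\alpha$ with $\alpha\pd\psi_\alpha$ for $\bar\mu$-almost every $\alpha$ and $\psi=\int\psi_\alpha\,d\bar\mu(\alpha)$. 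That is, $\vphi\isd\psi$.
\item Since $\vphi$ is maximal in $\isd$, it follows that $\psi=\vphi$.
\end{enumerate}

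Because $\psi$ was an arbitrary state with $\vphi\ps\psi$, the state $\vphi$ is maximal in $\ps$.

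There is no real obstacle here. All of the measure-theoretic work, namely producing the measurable family of normal states $\gamma_\alpha$ via Lemma~\ref{L:lifting} and the lifting theorem, was already done in Theorem~\ref{T:implication}. The only point to check is that the notion of maximality used for $\ps$ and for $\isd$ is the same formal one: $\vphi\preceq\psi$ implies $\psi=\vphi$. This makes maximality contravariant with respect to inclusion of relations: the containment ${\ps}\subseteq{\isd}$ gives that maximality in the larger relation $\isd$ implies maximality in the smaller relation $\ps$.
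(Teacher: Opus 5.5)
Your proposal is correct and matches the paper's proof: given $\vphi\ps\psi$, Theorem~\ref{T:implication} yields $\vphi\isd\psi$, and maximality of $\vphi$ in $\isd$ then forces $\psi=\vphi$. Your step 1, which unwinds the witness $\Phi$, is not needed, since the theorem is applied as a black box.
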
 
	\begin{proof}
		Let $\psi \in \mE(A)$ such that $\vphi \ps \psi$.  Then Theorem \ref{T:implication} implies that $\vphi \isd \psi$. Maximality of $\vphi$ in $\isd$ implies that $\psi=\vphi $. Hence $\vphi$ is maximal in $\ps$. 
	\end{proof}

	Next we show that the maximality of the pure states in the dilation order implies the maximality of the entire state space in the integral subdivision relation.

	\begin{proposition}\label{P:maximalitypassage}
		Let $S$ be an operator system contained in a $\C$-algebra $A$ such that $\C(S)=A$. If every pure state of $A$ is maximal in the dilation order $\pd$, then every state of $A$ is maximal in the integral subdivision relation $\isd$.
	\end{proposition}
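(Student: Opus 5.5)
Fix $\vphi\in\mE(A)$ and suppose $\vphi\isd\psi$. The plan is to test the definition of $\isd$ against a \emph{maximal} (boundary) representing measure of $\vphi$ that is concentrated on pure states. Since $A$ is separable, $\mE(A)$ is a compact metrizable convex set whose extreme points are the pure states. By the Choquet--Bishop--de Leeuw theorem (metrizable case), there is a Borel probability measure $\mu$ on $\mE(A)$ with $\mb(\mu)=\vphi$ and $\mu(\partial_e\mE(A))=1$. Here $\partial_e\mE(A)$ is a $G_\delta$, hence Borel.

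Applying the definition of $\vphi\isd\psi$ to this $\mu$ produces a $\bar\mu$-measurable field $\alpha\mapsto\psi_\alpha$ with two properties:
\begin{itemize}
\item $\alpha\pd\psi_\alpha$ for $\bar\mu$-almost every $\alpha$;
\item $\psi(a)=\int\psi_\alpha(a)\,d\bar\mu(\alpha)$ for all $a\in A$.
\end{itemize}
Intersecting the conull set on which $\alpha\pd\psi_\alpha$ holds with $\partial_e\mE(A)$, we obtain a $\bar\mu$-conull set $Y$ consisting of pure states $\alpha$ with $\alpha\pd\psi_\alpha$. By hypothesis every pure state is maximal in $\pd$, so $\psi_\alpha=\alpha$ for every $\alpha\in Y$. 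Hence, for all $a\in A$,
\[
\psi(a)=\int_{\mE(A)}\psi_\alpha(a)\,d\bar\mu(\alpha)=\int_{Y}\alpha(a)\,d\bar\mu(\alpha)=\int_{\mE(A)}\alpha(a)\,d\mu(\alpha)=\mb(\mu)(a)=\vphi(a).
\]
This uses the fact that integrating the continuous function $\widehat a$ against $\bar\mu$ agrees with integrating it against $\mu$. Thus $\psi=\vphi$, and $\vphi$ is maximal in $\isd$.

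The main point requiring care is the existence of a representing measure supported on the pure states together with the measurability bookkeeping. I would invoke the metrizable Choquet theorem for the compact convex set $\mE(A)$, noting that the extreme boundary is a $G_\delta$. Since $\partial_e\mE(A)$ is then Borel, the set $Y$ is $\bar\mu$-measurable and conull, and the integral identity goes through without any further use of the lifting machinery. Notice that the quantifier ``for every Borel measure $\mu$ with barycenter $\vphi$'' in the definition of $\isd$ is exactly what allows choosing this boundary measure. By contrast, $\pd$ only sees the Dirac measure $\delta_\vphi$, which explains why the conclusion holds for $\isd$ but fails for $\pd$.
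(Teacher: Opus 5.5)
Your proposal is correct and follows the paper's proof essentially step by step. You choose a representing measure for $\vphi$ concentrated on $\partial_e\mE(A)$ (Choquet--Bishop--de Leeuw, with $\partial_e\mE(A)$ Borel by metrizability), apply the definition of $\isd$ to that measure, use maximality of pure states in $\pd$ to force $\psi_\alpha=\alpha$ almost everywhere, and integrate to obtain $\psi=\vphi$.
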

	
	\begin{proof}
		Assume that every pure state of $A$ is maximal in $\pd$. Let $\vphi\in \mE(A)$. Let $\psi \in \mE(A)$ such that $\vphi \isd \psi$. Since $A$ is separable, the state space $\mE(A)$ is compact metrizable in the
		weak-$*$ topology. There exists a Borel probability measure $\mu$ on $\mE(A)$ such that $\mb(\mu)=\vphi$, and $\mu(\partial_e \mE(A))=1$ \cite[Theorem 4.2]{bishop_measure_extreme}. Here $\partial_e \mE(A)$ denotes the set of extreme points of $\mE(A)$, which is precisely the pure state space of $A$. $\mE(A)$ is metrizable as $A$ is separable \cite[Theorem 3.16]{Rudin_Functional} and hence $\partial_e\mE(A)$ is measurable \cite[Corollary I.4.4]{alfsen2012compact}.  Thus, for every $a\in A$, 
		\[
		\vphi(a)
		=
		\int_{\mE(A)} \alpha(a)\,d\mu(\alpha).
		\]
		Hence there exists a $\bar\mu$-measurable field of states $\alpha\longmapsto \psi_\alpha\in \mE(A)$ such that $\alpha \pd \psi_\alpha$ for $\bar\mu$-almost every $\alpha\in \mE(A)$ and
		\[
		\psi(a)
		=
		\int_{\mE(A)} \psi_\alpha(a)\,d\bar\mu(\alpha),
		\qquad a\in A.
		\]

		Let $X\subseteq \mE(A)$ be a $\bar\mu$-measurable set such that $\bar\mu(X)=1$ and $\alpha \pd\psi_\alpha$ for all $ \alpha \in X$. Since $\mu(\partial_e \mE(A))=1$, we also have that $\bar\mu\bigl(X\cap \partial_e \mE(A)\bigr)=1$. Let $\alpha\in X\cap \partial_e \mE(A)$ be a fixed element. Then $\alpha$ is a pure state and $\alpha\pd \psi_\alpha$. By hypothesis, $\psi_\alpha=\alpha$. Thus $\psi_\alpha=\alpha$ for $\bar{\mu}$-almost every $\alpha \in \mE(A)$. Therefore, for every $a\in A$,
		\[
		\begin{aligned}
			\psi(a)
			=
			\int_{\mE(A)} \psi_\alpha(a)\,d\bar\mu(\alpha) 
			=
			\int_{\mE(A)} \alpha(a)\,d\bar\mu(\alpha) 
			=
			\int_{\mE(A)} \alpha(a)\,d\mu(\alpha) 
			&=
			\vphi(a).
		\end{aligned}
		\]
		Hence $\psi=\vphi$. Therefore $\vphi$ is maximal in the integral subdivision dilation relation. every state of $A$ is maximal in
		$\isd$.
	\end{proof}
	
	Next we recover that the Clou\^atre-Thompson rigidity phenomenon as a consequence of the previous theorem. Moreover it is worthwhile to note that this order-theoretic approach is independent of the techniques used by Clou\^atre and Thompson in \cite{clouatre2024rigidity}

	\begin{corollary}\label{C:CTamended}
		Let $S$ be an operator system in a  $\C$-algebra $A$ such that $\C(S)=A$. Then the following are equivalent
		\begin{enumerate}
			\item Every irreducible representation is boundary representation. 
			\item Every representation has the unique tight extension property. 
		\end{enumerate}
	\end{corollary}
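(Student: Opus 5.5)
For $(1)\Rightarrow(2)$ I would stay entirely inside the state-space framework developed above. Assume every irreducible representation of $A$ is a boundary representation for $S$. The GNS representation of a pure state is irreducible, so the preceding proposition on pure states (combined with Theorem~\ref{T:dilationUEP}) shows that every pure state of $A$ is maximal in $\pd$. Proposition~\ref{P:maximalitypassage} then gives that every state of $A$ is maximal in $\isd$. By Corollary~\ref{C:maximalitycompare}, every state is also maximal in $\ps$. Theorem~\ref{T:UTEPmaximality} now says that $\pi_\vphi$ has the unique tight extension property for every $\vphi\in\mE(A)$.

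To pass from cyclic representations to arbitrary ones, I would use two facts:
\begin{enumerate}
\item every representation $\pi:A\to B(H)$ decomposes as a direct sum $\bigoplus_i\pi_i$ of cyclic representations;
\item each cyclic $\pi_i$ with unit cyclic vector $\xi_i$ is unitarily equivalent to the GNS representation of the vector state $a\mapsto\langle\pi_i(a)\xi_i,\xi_i\rangle$.
\end{enumerate}
I need to check that the unique tight extension property is invariant under unitary equivalence. This is clear: if $U\pi_i(\cdot)U^*=\pi_\vphi$, then conjugation by $U$ carries $\pi_i(A)''$ onto $\pi_\vphi(A)''$. It therefore carries UCP maps into one commutant-closure onto UCP maps into the other, and it preserves agreement on $S$. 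Proposition~\ref{P:UTEPdirectsum} then shows that $\pi$ has the unique tight extension property.

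For $(2)\Rightarrow(1)$, let $\pi$ be irreducible. Then $\pi(A)''=B(H)$, so the unique tight extension property is literally the unique extension property. Hence $\pi$ is a boundary representation.

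I expect no serious obstacle, since all the heavy lifting is in Proposition~\ref{P:maximalitypassage} and Theorem~\ref{T:UTEPmaximality}. The only points needing a word are these:
\begin{itemize}
\item Separability is used in Proposition~\ref{P:maximalitypassage} and is assumed throughout the paper.
\item The direct-sum decomposition may be uncountable, which Proposition~\ref{P:UTEPdirectsum} allows since its index set $I$ is arbitrary.
\item Unitary invariance must be verified as above.
\end{itemize}
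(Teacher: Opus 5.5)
Your proposal is correct and follows the paper's proof step for step. For $(1)\Rightarrow(2)$, the chain is the same: pure states are maximal in $\pd$, then Proposition~\ref{P:maximalitypassage}, Corollary~\ref{C:maximalitycompare}, Theorem~\ref{T:UTEPmaximality} and Proposition~\ref{P:UTEPdirectsum}; for $(2)\Rightarrow(1)$, you use $\pi(A)''=B(H)$ for irreducible $\pi$. Your explicit check that the unique tight extension property is preserved under unitary equivalence fills in a step the paper leaves implicit.
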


	\begin{proof}
		Let $\pi: A \to B(H)$ be an irreducible representation. Since $\pi(A)''=B(H)$, the unique extension property and the unique tight extension property coincide for $\pi$, so that $(2) \implies (1)$.

		Next, we assume that every irreducible representation is a boundary representation. Equivalently, this means that every pure state is maximal in $\pd$. Then by Proposition \ref{P:maximalitypassage} every state of $A$ is maximal in $\isd$. Hence every state is maximal in $\ps$ due to Corollary \ref{C:maximalitycompare}. Thus by Theorem \ref{T:UTEPmaximality}, every cyclic representation has the unique tight extension property. Therefore every representation has the unique tight extension property with respect to $S$ due to Proposition \ref{P:UTEPdirectsum} and hence $(1)\implies (2)$.
	\end{proof}

	\begin{remark}
		At the level of the state space, Arveson's hyperrigidity conjecture predicted the following implication: if every pure state is maximal for the dilation order, then every state is maximal for the dilation order. This implication is now known to be false. In Proposition~\ref{P:maximalitypassage}, we established the following weaker statement: if every pure state is maximal for the dilation order, then every state is maximal for the integral subdivision relation. Thus, Proposition~\ref{P:maximalitypassage} may be viewed as an amendment of Arveson's hyperrigidity conjecture at the state space level.
	\end{remark}

	\section{Comparison of the strong dilation and the integral subdivision relations}\label{S:comparison}

	\subsection{Equivalence of the relations in the multiplicity-free case}\label{SS:multiplicityfree}
	We have shown that, for states $\vphi,\psi$ of $A$, the relation $\vphi\ps\psi$ implies $\vphi\isd\psi$. The main goal of this subsection is to show that the converse implication also holds in certain cases. Next we prove a technical lemma.

	\begin{lemma}\label{L:commutingprojections}
		Let  $S$ be an operator system in a $\C$-algebra $A$ such that $\C(S)=A$. Let $\vphi,\psi\in \mE(A)$, and let $(\pi_\vphi,H_\vphi,\xi_\vphi)$ be the GNS representation of $\vphi$. Suppose that $\vphi \isd \psi $. Then, for every finite set $\mathcal F$ of pairwise commuting projections in $\pi_\vphi(A)'$, there exists a UCP map $\Phi_{\mathcal F}:A\to B(H_\vphi)$ such that
		\begin{enumerate}
			\item $\Phi_{\mathcal F}(s)=\pi_\vphi(s)$ for every $s\in S$;
			\item $\psi(a)=\langle \Phi_{\mathcal F}(a)\xi_\vphi,\xi_\vphi\rangle$ for every
			$a\in A$;
			\item $\Phi_{\mathcal F}(a)P=P\Phi_{\mathcal F}(a)$ for every
			$a\in A$ and every $P\in\mathcal F$.
		\end{enumerate}
	\end{lemma}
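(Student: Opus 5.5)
Since the projections in $\mathcal F=\{P_1,\dots,P_r\}$ commute, I will pass to the atoms of the finite Boolean algebra they generate. For $\varepsilon\in\{0,1\}^r$, set $Q_\varepsilon=\prod_{k=1}^r P_k^{\varepsilon_k}(1-P_k)^{1-\varepsilon_k}$. These are mutually orthogonal projections in $\pi_\vphi(A)'$ with $\sum_\varepsilon Q_\varepsilon=1$, and each $P_k$ is a sum of some of the $Q_\varepsilon$. It therefore suffices to build $\Phi_{\mathcal F}$ that satisfies (1) and (2) and commutes with every $Q_\varepsilon$. I will make it block diagonal with respect to $H_\vphi=\bigoplus_\varepsilon Q_\varepsilon H_\vphi$.

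\emph{The measure.} Let $J=\{\varepsilon: Q_\varepsilon\xi_\vphi\neq 0\}$, and for $\varepsilon\in J$ put $\lambda_\varepsilon=\|Q_\varepsilon\xi_\vphi\|^2>0$ and
\[
\vphi_\varepsilon(a)=\lambda_\varepsilon^{-1}\langle \pi_\vphi(a)Q_\varepsilon\xi_\vphi,Q_\varepsilon\xi_\vphi\rangle .
\]
Since $Q_\varepsilon$ commutes with $\pi_\vphi(A)$, these are states with $\sum_{\varepsilon\in J}\lambda_\varepsilon=1$ and $\sum_{\varepsilon\in J}\lambda_\varepsilon\vphi_\varepsilon=\vphi$. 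Let $\mu=\sum_{\varepsilon\in J}\lambda_\varepsilon\delta_{\vphi_\varepsilon}$, which is a Borel probability measure with $\mb(\mu)=\vphi$.

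\emph{Applying the relation.} Since $\vphi\isd\psi$, there is a $\bar\mu$-measurable field $\alpha\mapsto\psi_\alpha$ with $\alpha\pd\psi_\alpha$ $\bar\mu$-a.e. and $\psi=\int\psi_\alpha\,d\bar\mu$. Every point $\vphi_\varepsilon$ carries positive mass, so "almost everywhere" means everywhere on the support. Writing $\psi_\varepsilon:=\psi_{\vphi_\varepsilon}$, we get $\vphi_\varepsilon\pd\psi_\varepsilon$ for every $\varepsilon\in J$ and
\[
\psi=\sum_{\varepsilon\in J}\lambda_\varepsilon\psi_\varepsilon .
\]
If two of the $\vphi_\varepsilon$ coincide, the measure merges their atoms, but the field assigns them the same $\psi_\varepsilon$, so the identity still holds.

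\emph{Gluing.} For $\varepsilon\in J$, the triple
\[
\bigl(\pi_\vphi|_{Q_\varepsilon H_\vphi},\,Q_\varepsilon H_\vphi,\,\lambda_\varepsilon^{-1/2}Q_\varepsilon\xi_\vphi\bigr)
\]
is a representation of $\vphi_\varepsilon$, because $Q_\varepsilon H_\vphi$ reduces $\pi_\vphi$. Condition (2) of Proposition~\ref{P:equivalence} then gives a UCP map $\Phi_\varepsilon:A\to B(Q_\varepsilon H_\vphi)$ that agrees with $\pi_\vphi|_{Q_\varepsilon H_\vphi}$ on $S$ and satisfies
\[
\lambda_\varepsilon^{-1}\langle\Phi_\varepsilon(a)Q_\varepsilon\xi_\vphi,Q_\varepsilon\xi_\vphi\rangle=\psi_\varepsilon(a).
\]
For $\varepsilon\notin J$, set $\Phi_\varepsilon=\pi_\vphi|_{Q_\varepsilon H_\vphi}$. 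Define $\Phi_{\mathcal F}=\bigoplus_\varepsilon\Phi_\varepsilon$, which is UCP and equals $\pi_\vphi$ on $S$. It commutes with every $Q_\varepsilon$ by construction, hence with every $P\in\mathcal F$. Finally, since $\xi_\vphi=\sum_\varepsilon Q_\varepsilon\xi_\vphi$ and the blocks are orthogonal,
\[
\langle\Phi_{\mathcal F}(a)\xi_\vphi,\xi_\vphi\rangle=\sum_{\varepsilon\in J}\lambda_\varepsilon\psi_\varepsilon(a)=\psi(a).
\]

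The only delicate step is choosing the measure so that the $\isd$ hypothesis yields a \emph{finite} family of dilations, one per atom, rather than only an almost-everywhere statement. With a finitely supported $\mu$ this is automatic, and measurability of the field is trivial. The other key observation is that Proposition~\ref{P:equivalence}(2) lets us realize each dilation on the non-cyclic reducing subspace $Q_\varepsilon H_\vphi$ rather than on the GNS space of $\vphi_\varepsilon$; this is what makes the block-diagonal gluing possible.
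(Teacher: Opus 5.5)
Your proof is correct and follows the paper's argument: the atoms $Q_\varepsilon$ of the Boolean algebra generated by $\mathcal F$, the finitely supported representing measure $\sum\lambda_\varepsilon\delta_{\vphi_\varepsilon}$ fed into $\isd$, and a block-diagonal direct sum of dilation witnesses. Your explicit handling of coincident atoms is extra care the paper skips.

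One correction to your closing remark: $Q_\varepsilon H_\vphi$ is not non-cyclic. Since $\xi_\vphi$ is cyclic and $Q_\varepsilon\in\pi_\vphi(A)'$, the vector $Q_\varepsilon\xi_\vphi$ is cyclic for $\pi_\vphi|_{Q_\varepsilon H_\vphi}$. So this block is, up to unitary equivalence, the GNS representation of $\vphi_\varepsilon$, and $Q_\varepsilon\xi_\vphi\neq 0$ whenever $Q_\varepsilon\neq 0$; this is how the paper proceeds, and your appeal to condition (2) of the equivalence proposition works equally well.
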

	
	\begin{proof}
		Write $\mathcal F=\{P_1,\ldots,P_n\}$. Let $\mathcal G=\{Q_1,\ldots,Q_m\}$ be the collection of nonzero projections of the form $Q=X_1\cdots X_n$, where $X_k\in\{P_k,I-P_k\}$. Then the $Q_j$'s are pairwise orthogonal projections in $\pi_\vphi(A)'$ and
		\[
		\sum_{j=1}^m Q_j=I.
		\]
		Put $K_j=Q_jH_\vphi$, which reduces $\pi_\vphi(A)$.
		Let $\pi_j(a)=\pi_\vphi(a)|_{K_j}$. Note that $Q_j\xi_\vphi\neq 0$, since $\xi_\vphi$ is cyclic for
		$\pi_\vphi(A)$ and $Q_j\neq 0$. Set
		\[
		t_j=\|Q_j\xi_\vphi\|^2,
		\qquad
		\xi_j=t_j^{-1/2}Q_j\xi_\vphi,
		\]
		and define $\vphi_j(a)=\langle \pi_j(a)\xi_j,\xi_j\rangle$ for all $a \in A$. Then $(\pi_j,K_j,\xi_j)$ is the GNS representation of $\vphi_j$, and $\displaystyle \vphi=\sum_{j=1}^m t_j\vphi_j$. Let $\displaystyle \mu=\sum_{j=1}^m t_j\delta_{\vphi_j}$. Then $\mu$ is a Borel probability measure on $\mE(A)$ such that  $\mb(\mu)=\vphi$. Since $\vphi\isd\psi$, applying the definition to $\mu$ gives states $\psi_j\in \mE(A)$ such that $\phi_j\pd\psi_j$ for all $1\leq j\leq m$ and
		\[
		\psi=\sum_{j=1}^m t_j\psi_j.
		\]
		For each $j$, choose a UCP map $\Phi_j:A\to B(K_j)$ witnessing $\phi_j \pd \psi_j$ with respect to the GNS representation
		$(\pi_j,K_j,\xi_j)$; that is, $\Phi_j(s)=\pi_j(s)$ for all $s \in S$ and $\psi_j(a)=\langle \Phi_j(a)\xi_j,\xi_j\rangle$ for all $a \in A$. Let 
		\[
		\Phi_{\mathcal F}
		=
		\bigoplus_{j=1}^m \Phi_j:
		A\to B\left(\bigoplus_{j=1}^mK_j\right)=B(H_\vphi).
		\]
		Then $\Phi_{\mathcal F}$ is UCP, and $\Phi_{\mathcal F}(s)=\pi_\vphi(s)$   for all $s\in S$, Moreover, for $a\in A$,
		\[
		\begin{aligned}
			\langle \Phi_{\mathcal F}(a)\xi_\vphi,\xi_\vphi\rangle
			=
			\sum_{j=1}^m
			\langle \Phi_j(a)Q_j\xi_\vphi,Q_j\xi_\vphi\rangle 
			&=
			\sum_{j=1}^m
			t_j\langle \Phi_j(a)\xi_j,\xi_j\rangle \\
			&=
			\sum_{j=1}^m t_j\psi_j(a)
			=
			\psi(a).
		\end{aligned}
		\]
		Finally, $\Phi_{\mathcal F}(a)$ is block diagonal with respect to
		$H_\vphi=\bigoplus_jK_j$, and hence commutes with each $Q_j$. Since each
		$P_k\in\mathcal F$ is a sum of some of the $Q_j$'s, $\Phi_{\mathcal F}(a)$
		commutes with every $P\in\mathcal F$.
	\end{proof}
	
	Using Lemma~\ref{L:commutingprojections}, we next show that if $\varphi,\psi\in\mE(A)$ and the GNS representation of $\varphi$ is multiplicity-free, then
	\[
	\varphi\ps\psi
	\quad\Longleftrightarrow\quad
	\varphi\isd\psi.
	\]
	Recall that a representation $\pi:A\to B(H)$ is said to be \emph{multiplicity-free} if its commutant $\pi(A)'$ is commutative \cite[Definition 6.27]{TakesakiI}.

	\begin{theorem} \label{T:commutativeequivalence}
		Let $S$ be an operator system in a $\C$-algebra $A$ such that $\C(S)=A$. Let $\vphi,\psi\in \mE(A)$ and the GNS representation of $\vphi$ is multiplicity-free. Then $\vphi \ps \psi$ if and only if $\vphi \isd \psi$. 
	\end{theorem}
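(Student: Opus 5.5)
One direction is already available: if $\vphi\ps\psi$ then $\vphi\isd\psi$ by Theorem~\ref{T:implication}, with no assumption on $\pi_\vphi$. So the work is in the converse. We assume $\vphi\isd\psi$ and that $\pi_\vphi(A)'$ is commutative, and we aim to produce a UCP map $\Phi:A\to\pi_\vphi(A)''$ that agrees with $\pi_\vphi$ on $S$ and implements $\psi$ at $\xi_\vphi$. The plan is to use Lemma~\ref{L:commutingprojections} on larger and larger finite families of projections in the commutant, pass to a limit by compactness, and then use multiplicity-freeness to see that the limit lands in $\pi_\vphi(A)''$.

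First, since $\pi_\vphi(A)'$ is an abelian von Neumann algebra, any finite set of projections in it is pairwise commuting. Let $\Lambda$ be the set of finite subsets $\mathcal F$ of projections in $\pi_\vphi(A)'$, directed by inclusion. For each $\mathcal F\in\Lambda$, Lemma~\ref{L:commutingprojections} gives a UCP map $\Phi_{\mathcal F}:A\to B(H_\vphi)$ with three properties: $\Phi_{\mathcal F}|_S=\pi_\vphi|_S$, $\langle\Phi_{\mathcal F}(a)\xi_\vphi,\xi_\vphi\rangle=\psi(a)$, and $\Phi_{\mathcal F}(a)$ commutes with every $P\in\mathcal F$. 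The UCP maps $A\to B(H_\vphi)$ form a compact set in the point weak-$*$ topology (the BW topology). Hence some subnet $(\Phi_{\mathcal F_\lambda})$ converges to a UCP map $\Phi:A\to B(H_\vphi)$. The first two properties pass to the limit directly, since weak-$*$ convergence preserves both the equality on $S$ and the vector functional at $\xi_\vphi$.

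Next, fix a projection $P\in\pi_\vphi(A)'$. Because the subnet is cofinal in $\Lambda$, eventually $P\in\mathcal F_\lambda$, and then $\Phi_{\mathcal F_\lambda}(a)P=P\Phi_{\mathcal F_\lambda}(a)$. Left and right multiplication by the fixed operator $P$ are weak-$*$ continuous, so in the limit $\Phi(a)P=P\Phi(a)$ for all $a\in A$. A von Neumann algebra is the norm-closed span of its projections, so $\Phi(a)$ commutes with all of $\pi_\vphi(A)'$. By the double commutant theorem, $\Phi(a)\in\pi_\vphi(A)''$, and therefore $\Phi$ witnesses $\vphi\ps\psi$.

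The step I expect to need the most care is the cofinality argument, namely that one fixed projection eventually lies in every $\mathcal F_\lambda$ along the subnet. This holds by the definition of a subnet of a net indexed by a directed set, but I would state it explicitly. I would also check that Lemma~\ref{L:commutingprojections} needs only commuting projections in $\pi_\vphi(A)'$, which commutativity of the commutant guarantees. This is exactly where the multiplicity-free hypothesis enters: without it one could only make $\Phi(a)$ commute with abelian subfamilies of projections, and this would not force $\Phi(a)\in\pi_\vphi(A)''$.
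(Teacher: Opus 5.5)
Your proposal is correct and follows essentially the same route as the paper. Both apply Lemma~\ref{L:commutingprojections} over the directed set of finite sets of projections in the commutative commutant and extract a pointwise weak-$*$ (equivalently, on bounded sets, weak-operator) convergent subnet; cofinality then gives $\Phi(a)P=P\Phi(a)$ for every projection $P\in\pi_\vphi(A)'$, so $\Phi(a)\in\pi_\vphi(A)''$ by the double commutant theorem, and the converse comes from Theorem~\ref{T:implication}.
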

	
	\begin{proof}
		Let $(\pi_{\vphi},H_\vphi,\xi_\vphi)$ be the GNS representation of $\vphi$. Since $\pi_\vphi$ is multiplicity-free, we have that $\pi_\vphi(A)'$ is commutative. 
		
		First assume that $\vphi \isd \psi$. Let $\Lambda$ be the directed set of all finite sets of projections in $\pi_\vphi(A)'$, ordered by inclusion. Since $\pi_\vphi(A)'$ is commutative, every $\mathcal{F}\in\Lambda$ is a finite set of pairwise commuting projections. By Lemma~\ref{L:commutingprojections}, for every $\mathcal{F}\in\Lambda$ there exists a UCP map $\Phi_\mathcal{F}:A\to B(H_\vphi)$ such that
		\[
		\Phi_\mathcal{F}(s)=\pi_\vphi(s),
		\qquad s\in S,
		\]
		\[
		\psi(a)=\langle \Phi_\mathcal{F}(a)\xi_\vphi,\xi_\vphi\rangle,
		\qquad a\in A,
		\]
		and
		\[
		\Phi_\mathcal{F}(a)P=P\Phi_\mathcal{F}(a),
		\qquad a\in A,\ P\in \mathcal{F}.
		\]
		The set of UCP maps from $A$ to $B(H_\vphi)$ is compact in the point weak-operator topology. Passing to a cofinal subnet, we may assume that $\Phi_\mathcal{F}\to \Phi$ in the point weak-operator topology, for some UCP map $\Phi:A\to B(H_\vphi)$. It follows immediately that $\Phi(s)=\pi_\vphi(s)$ for all $s \in S$ and $\psi(a)=\langle \Phi(a)\xi_\vphi,\xi_\vphi\rangle$ for all $a \in A$.  \par 
		It remains to show that $\Phi(a)\in\pi_\vphi(A)''$ for every $a\in A$. Fix a
		projection $P\in\pi_\vphi(A)'$, and set $ \Lambda_P= \{ \mathcal{F}\in\Lambda:P\in \mathcal{F}\}$. Then $\Lambda_P$ is cofinal in $\Lambda$. Hence the subnet indexed by
		$\Lambda_P$ also converges point weak-operator to $\Phi$. By construction, $\Phi_\mathcal{F}(a)P=P\Phi_\mathcal{F}(a)$ for every $a\in A$. Passing to the weak-operator limit gives
		\[
		\Phi(a)P=P\Phi(a),
		\qquad a\in A.
		\]
		Thus $\Phi(a)$ commutes with every projection in $\pi_\vphi(A)'$. Since a von Neumann algebra is generated by its projections, $\Phi(a)$ commutes with all of $\pi_\vphi(A)'$. Therefore
		\[
		\Phi(a)\in(\pi_\vphi(A)')'=\pi_\vphi(A)''.
		\]
		
		Thus $\Phi:A\to\pi_\vphi(A)''$ is a UCP map satisfying $\Phi(s)=\pi_\vphi(s)$ for all $s \in S$ and $\psi(a)=\langle \Phi(a)\xi_\vphi,\xi_\vphi\rangle$ for all $a \in A$. Hence $\vphi \ps \psi$. This proves that $\vphi \isd \psi \implies \vphi \ps \psi$ when $\pi_\vphi: A \to B(H_\vphi)$ is multiplicity-free. \par 
		The other implication does not require the GNS representation of $\vphi$ to be multiplicity-free and is due to Theorem \ref{T:implication}. This completes the proof.
	\end{proof}
	
	The next result is also an immediate consequence of Theorem \ref{T:commutativeequivalence}. 
	
	\begin{corollary}
		Let $S$ be an operator system in a $\C$-algebra $A$ such that $\C(S)=A$. Let $\vphi\in \mE(A)$ and assume that the GNS representation of $\vphi$ is multiplicity free. Then $\vphi$ is maximal in the strong dilation relation if and only if $\vphi$ is maximal in the integral subdivision relation. 
	\end{corollary}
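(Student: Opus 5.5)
This corollary follows directly from Theorem~\ref{T:commutativeequivalence}. The key observation is that the multiplicity-free hypothesis concerns only $\vphi$. Maximality of $\vphi$ in either relation is a statement about the set of states $\psi$ with $\vphi\ps\psi$, respectively $\vphi\isd\psi$. Since the GNS representation of $\vphi$ is multiplicity-free, Theorem~\ref{T:commutativeequivalence} applies to every $\psi\in\mE(A)$. Hence the two sets
\[
\{\psi\in\mE(A):\vphi\ps\psi\}
\quad\text{and}\quad
\{\psi\in\mE(A):\vphi\isd\psi\}
\]
coincide.

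For the forward implication, assume $\vphi$ is maximal in $\ps$ and let $\psi$ satisfy $\vphi\isd\psi$. By Theorem~\ref{T:commutativeequivalence} we get $\vphi\ps\psi$, and maximality gives $\psi=\vphi$. So $\vphi$ is maximal in $\isd$.

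For the converse, assume $\vphi$ is maximal in $\isd$. Corollary~\ref{C:maximalitycompare} already gives maximality in $\ps$, and it needs no multiplicity assumption. Alternatively, one can argue directly: if $\vphi\ps\psi$, then Theorem~\ref{T:implication} gives $\vphi\isd\psi$, hence $\psi=\vphi$.

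There is no real obstacle. The only point to check is that the hypothesis of Theorem~\ref{T:commutativeequivalence} is imposed on $\vphi$ alone, not on $\psi$, which is what lets it apply uniformly to every competitor $\psi$.
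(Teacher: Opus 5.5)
Your proof is correct and is essentially the paper's own argument: Corollary~\ref{C:maximalitycompare} gives that maximality in $\isd$ implies maximality in $\ps$, and conversely Theorem~\ref{T:commutativeequivalence} upgrades $\vphi\isd\psi$ to $\vphi\ps\psi$, so $\ps$-maximality forces $\psi=\vphi$. You also correctly note why this works: the multiplicity-free hypothesis concerns only $\vphi$, so Theorem~\ref{T:commutativeequivalence} applies to every competitor $\psi$.
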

	\begin{proof}
		Let $\vphi$ be maximal in $\isd$. Then by Corollary \ref{C:maximalitycompare}, $\vphi$ is maximal in $\ps$. Conversely, let $\vphi$ be maximal in $\ps$ and let $\psi \in \mE(A)$ such that $\vphi \isd \psi$. Since the GNS representation of $\vphi$ is multiplicity-free, Theorem \ref{T:commutativeequivalence} implies that $\vphi \ps \psi$. But maximality of $\vphi$ in $\ps$, implies that $\psi= \vphi$. Hence $\vphi$ is maximal in $\isd$.
	\end{proof}

	Since every cyclic representation of a commutative $\C$-algebra is multiplicity-free, the preceding theorem immediately yields the following corollary.
	
	\begin{corollary}
		Let $S$ be an operator system in a commutative $\C$-algebra $A$ such that $\C(S)=A$. Let $\vphi,\psi\in \mE(A)$.  Then the following statements are true.
		\begin{enumerate}
			\item $\vphi \ps \psi$ if and only if $\vphi \isd \psi$.
			\item $\vphi$ is maximal in $\ps$ if and only if $\vphi$ is maximal in $\isd$.
		\end{enumerate}
	\end{corollary}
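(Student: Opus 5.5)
The plan is to deduce both parts directly from Theorem~\ref{T:commutativeequivalence} and the corollary that follows it. The only new input is that the GNS representation of every state of a commutative $\C$-algebra is multiplicity-free.

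\emph{Multiplicity-freeness.} Let $A$ be commutative and $\vphi\in\mE(A)$. Then $\pi_\vphi(A)''$ is an abelian von Neumann algebra on $H_\vphi$, and it has the cyclic vector $\xi_\vphi$. A standard fact says that an abelian von Neumann algebra with a cyclic vector is maximal abelian, so
\[
\pi_\vphi(A)'=\pi_\vphi(A)''.
\]
In particular $\pi_\vphi(A)'$ is commutative, which is exactly multiplicity-freeness.

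To prove the standard fact I will argue concretely. Identify $H_\vphi$ with $L^2(X,\mu)$, where $A=C(X)$ and $\mu$ is the measure representing $\vphi$, and note that $\pi_\vphi(A)''\cong L^\infty(X,\mu)$ acting by multiplication. Let $T\in\pi_\vphi(A)'$ and put $g=T\mathbf 1_X\in L^2(X,\mu)$. For $f\in C(X)$,
\[
Tf=TM_f\mathbf 1_X=M_fT\mathbf 1_X=gf.
\]
Boundedness of $T$ on the dense subspace $C(X)$ forces $g\in L^\infty(X,\mu)$, and hence $T=M_g\in\pi_\vphi(A)''$.

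Alternatively, one can avoid the concrete model. Since $\xi_\vphi$ is cyclic for $\pi_\vphi(A)''$, it is separating for $\pi_\vphi(A)'$. Moreover $\pi_\vphi(A)''\subseteq\pi_\vphi(A)'$ because $\pi_\vphi(A)''$ is abelian. Then $\xi_\vphi$ is cyclic for the larger algebra $\pi_\vphi(A)'$ as well as separating for it. For $T\in\pi_\vphi(A)'$, choose $S_n\in\pi_\vphi(A)''$ with $S_n\xi_\vphi\to T\xi_\vphi$, and compare $T$ with $S_n$ on vectors of the form $R\xi_\vphi$, $R\in\pi_\vphi(A)''$.

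I plan to use the concrete $L^2$ argument, since Subsection 2.3 of the paper already sets up that model.

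\emph{Part (1).} With multiplicity-freeness established, Theorem~\ref{T:commutativeequivalence} applies to every $\vphi$, giving $\vphi\ps\psi\iff\vphi\isd\psi$ for all $\vphi,\psi\in\mE(A)$.

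\emph{Part (2).} This is the preceding corollary on maximality in the multiplicity-free case, applied to each $\vphi$. It also follows directly from (1), since the two relations coincide.

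\emph{Main obstacle.} The only step needing care is the commutant computation in the first paragraph, and in particular the boundedness argument showing $g\in L^\infty(X,\mu)$. If $\|g\|_\infty>\|T\|$, test $T$ on indicator-like functions supported where $|g|$ is large, approximated by continuous functions via Lusin's theorem. This gives $\|T\|\ge\|g\|_\infty$, a contradiction.
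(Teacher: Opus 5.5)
Your proposal is correct and follows the paper's route: the paper deduces this corollary from Theorem~\ref{T:commutativeequivalence} and the subsequent maximality corollary, using the standard fact (stated there without proof) that cyclic representations of commutative $\C$-algebras are multiplicity-free. Your only addition is a verification of that fact via the $L^2(X,\mu)$ commutant computation, which is sound. Your abstract alternative would need an extra step to control the possibly unbounded approximants $S_n$, for instance $\|R^*\xi_\vphi\|=\|R\xi_\vphi\|$ for $R$ in the abelian algebra, but you do not rely on it.
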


	Motivated by the amended form of Arveson's hyperrigidity conjecture, we introduced the strong dilation relation $\ps$ and the integral subdivision relation $\isd$. Our results yield a state-space analogue of the Clou\^atre--Thompson rigidity phenomenon and, a priori, a stronger maximality statement: \emph{if every pure state is maximal in the dilation order, then every state is maximal for $\isd$}. To determine whether this genuinely strengthens the Clou\^atre-Thompson theorem, one must know whether $\ps$ and $\isd$ can differ, or  whether they can have different maximal states. We do not know whether this occurs. Since the two relations coincide in the commutative and multiplicity-free settings, any counterexample must lie beyond those classes. We therefore leave this as an open problem.
	
	\medskip
	
	\noindent\textbf{Question.}
	Can the strong dilation relation $\ps$ and the integral subdivision relation
	$\isd$ differ? More specifically, can they have different maximal states?
	
	\medskip

	\subsection{Equivalence of Choquet order, strong dilation relation and integral subdivision relation in the commutative case}\label{SS:Choquet}

	We conclude this section by relating the state-space relations considered above with abstract Choquet theory.  For a more systematic discussion on abstract Choquet theory we refer the reader to \cite{alfsen2012compact}.
	
	Let $X$ be a compact metrizable space, and let $S\subset C(X)$ be a function system that separates the points of $X$. Then by  Stone-Weierstrass theorem, $\C(S)=C(X)$. Let $\mE(S)$ denote the state space of $S$. We consider the evaluation map $\iota:X\to \mE(S)$ defined as $\iota(x)(s)=s(x)$ for all $s \in S$. For $s\in S_{\mathrm{sa}}$, define $\widehat{s}:\mE(S)\to \mathbb R$ as $\widehat{s}(\rho)= \rho(s)$ for all $\rho \in \mE(S)$. Then $\widehat{s}$ is continuous and affine, and $\widehat{s}\circ \iota=s$.
	
	Let $\mathcal C_S$ denote the uniformly closed max-stable cone in
	$C(X)_{\mathrm{sa}}$ generated by $S_{\mathrm{sa}}$. Thus $\mathcal C_S$ is the
	norm-closed cone generated by the positive linear span of functions of the form
	\[
	\max\{s_1,\dots,s_n\},
	\qquad s_1,\dots,s_n\in S_{\mathrm{sa}}.
	\]
	
	\begin{proposition}\label{P:max-stable-cone}
		With notation as above,
		\[
		\mathcal C_S= \overline{\{F\circ\iota:F\in C(\mE(S))\text{ is convex}\}}^{\|\,\|_{\infty}}.
		\]
	\end{proposition}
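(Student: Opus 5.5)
We prove the two inclusions separately. Write $\mathcal D$ for the right-hand side, the uniform closure of $\{F\circ\iota : F\in C(\mE(S)) \text{ convex}\}$. Here $\mE(S)$ is a compact convex subset of the dual of $S$ with the weak-$*$ topology. It is metrizable because $S\subset C(X)$ is separable, and $A(\mE(S))$ consists of the real parts of the functions $\widehat{s}$, by Kadison's representation theorem.

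\emph{Inclusion $\mathcal C_S\subseteq\mathcal D$.} For $s_1,\dots,s_n\in S_{\mathrm{sa}}$, the function $F=\max\{\widehat{s_1},\dots,\widehat{s_n}\}$ is continuous and convex on $\mE(S)$. Since $\widehat{s_i}\circ\iota=s_i$, we get $F\circ\iota=\max\{s_1,\dots,s_n\}$. Positive combinations of continuous convex functions are again continuous and convex, and composition with $\iota$ is linear. Hence the positive linear span of such maxima lies in $\{F\circ\iota: F \text{ convex}\}$. Since $\mathcal D$ is norm closed, $\mathcal C_S\subseteq\mathcal D$.

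\emph{Inclusion $\mathcal D\subseteq\mathcal C_S$.} Since $\mathcal C_S$ is norm closed, it suffices to show that $F\circ\iota\in\mathcal C_S$ for every continuous convex $F$ on $K:=\mE(S)$. The key classical fact is that such an $F$ is the pointwise supremum of the continuous affine functions $h\le F$ on $K$. This follows from Hahn--Banach separation of the compact convex epigraph in $E\times\mathbb R$, where $E$ is the ambient locally convex space; see \cite[Proposition I.1.1 / Corollary I.1.3]{alfsen2012compact}.

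Fix $\varepsilon>0$. For each $\rho\in K$, choose an affine $h_\rho\le F$ with $h_\rho(\rho)>F(\rho)-\varepsilon$. By continuity, this strict inequality holds on an open neighbourhood of $\rho$. Compactness gives finitely many $h_1,\dots,h_n$ with $F-\varepsilon<\max_i h_i\le F$ on $K$. So $F$ is a uniform limit of finite maxima of continuous affine functions on $K$.

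Each continuous affine $h$ on $K$ is a uniform limit of functions $\widehat{s}+c$ with $s\in S_{\mathrm{sa}}$ and $c\in\mathbb R$. Indeed, by Kadison's theorem $A(K)$ is exactly $\{\widehat{s}: s\in S_{\mathrm{sa}}\}$ (which already contains the constants, since $1\in S$), so in fact equality holds without any approximation. Then $\max_i h_i\circ\iota=\max_i s_i$ with $s_i\in S_{\mathrm{sa}}$, and this lies in $\mathcal C_S$. Moreover, $\|\max_i h_i\circ\iota-F\circ\iota\|_\infty\le\varepsilon$. Letting $\varepsilon\to0$ gives $F\circ\iota\in\mathcal C_S$.

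The main point requiring care is the identification $A(\mE(S))=\{\widehat{s}:s\in S_{\mathrm{sa}}\}$. This is the Kadison representation theorem for the function system $S$, and it is where completeness of $S$ is used; $S$ is a closed operator system, so this is fine. If one only knew density of $\{\widehat s\}$ in $A(K)$, the argument would still go through, since $\max$ is $1$-Lipschitz in the uniform norm and $\mathcal C_S$ is closed.

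The other ingredient is the supremum-of-affine-minorants representation of convex continuous functions on a compact convex set. It is standard, and we would simply cite it from \cite{alfsen2012compact}.
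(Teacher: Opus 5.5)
Your proof is correct and follows the same route as the paper. Both inclusions rest on Kadison's identification of $S_{\mathrm{sa}}$ with the continuous affine functions on $\mE(S)$ and on the uniform approximation of continuous convex functions by finite maxima of affine ones. You derive that approximation from the affine-minorant representation plus a compactness argument, where the paper simply cites it. Your remark that density of $\{\widehat{s}\}$ would suffice nicely covers a possibly non-closed $S$. One cosmetic point: the epigraph of $F$ is closed and convex but not compact, so separate the point $(\rho,F(\rho)-\varepsilon)$ from this closed convex set, or from a truncated compact piece of it.
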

	
	\begin{proof}
		By the Kadison representation theorem for function systems, the map $s\mapsto \widehat{s}$ identifies $S_{\mathrm{sa}}$ with the real continuous affine functions on $\mE(S)$ \cite{Kadison}. Hence finite maxima of elements of $S_{\mathrm{sa}}$ are precisely pullbacks along $\iota$ of finite maxima of continuous affine functions on $\mE(S)$.
		
		Every finite maximum of continuous affine functions is continuous and convex.
		Thus
		\[
		\mathcal C_S \subseteq \overline{ \{F\circ\iota:F\in C(\mE(S))\text{ is convex}\}}^{\|\,\|_{\infty}}.
		\]
		Conversely, by the standard approximation theorem for compact convex sets,
		every continuous convex function on $\mE(S)$ is the uniform limit of finite maxima
		of continuous affine functions. Pulling back along $\iota$, it follows that $F\circ\iota\in\mathcal C_S$ whenever $F\in C(\mE(S))$ is convex. This proves the claim.
	\end{proof}
	For the rest of this section, we identify a Borel probability measure $\mu$ on
	$X$ with the state on $C(X)$ given by
	\[
	\mu(f)=\int_X f\,d\mu,
	\qquad f\in C(X).
	\]
	The pushforward of $\mu$ under $\iota: X \to \mE(S)$ denoted by $\iota_*\mu$, is defined as 
	\[
	\iota_*\mu(E)= \mu(\iota^{-1}(E)),
	\]
	for all Borel measurable subset of $\mE(S)$.
	
	Next we state the definition of abstract Choquet order on the state space of a commutative $\C$-algebra. For a detailed discussion on this topic we refer to \cite[I.5]{alfsen2012compact}
	
	\begin{definition}
		Let $\mu,\nu$ be Borel probability measures on $X$. We say that $\mu$ is
		dominated by $\nu$ in the \emph{abstract Choquet order} generated by $S$, and write $\mu\preceq_{\mathrm C}\nu$ if
		\[
		\mu(f)\leq \nu(f),
		\qquad \forall f\in \mathcal C_S.
		\]
	\end{definition}
	Note that if we start with a compact convex set $K$ and function system $S=A(K)$ of all continuous affine functions on $K$, then $\mathcal{C}_S$ is precisely the set of all continuous convex functions on $K$. Hence in this setup, $\preceq_{\mathrm{C}}$ is the classical Choquet order determined by convex functions \cite{alfsen2012compact}, \cite{phelps2002lectures}. Next we show that the relations we considered are equivalent to the abstract Choquet order.

	\begin{theorem}\label{T:choquet-isd-characterization}
		Let $X$ be a compact, metrizable space, and let $S\subset C(X)$ be a function system such that $S$ separates the points of $X$. Let $\mu,\nu$ be Borel probability measures on $X$. Then the following are equivalent:
		\begin{enumerate}
			\item $\mu\preceq_\mathrm{C}\nu$, that is,
			\[
			\int_X f\,d\mu\leq \int_X f\,d\nu, \qquad f\in \mathcal C_S.
			\]
			
			\item $\mu\ps\nu$.
			
			\item $\mu \isd \nu$.
			\item there exists a $\mu$-measurable field of probability
			measures $ x\mapsto \lambda_x\in \operatorname{Prob}(X)$ such that
			\[
			\lambda_x(s)=s(x), \qquad s \in S
			\]
			for $\mu$-almost every $x\in X$, and
			\[
			\int_X f\,d\nu
			=
			\int_X\lambda_x (f)d\mu(x),
			\qquad f\in C(X).
			\]
		\end{enumerate}
	\end{theorem}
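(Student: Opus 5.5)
Since $C(X)$ is commutative, the corollary following Theorem~\ref{T:commutativeequivalence} already gives (2)$\iff$(3). It therefore suffices to prove (1)$\iff$(4) and (2)$\iff$(4). Throughout, the GNS data of $\mu$ is $L^2(X,\mu)$ with cyclic vector $\mathbf 1_X$, and $\pi_\mu(C(X))''=L^\infty(X,\mu)$.

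For (4)$\Rightarrow$(2), define $\Phi:C(X)\to L^\infty(X,\mu)$ by $\Phi(f)=[x\mapsto\lambda_x(f)]$. This map is unital and positive, hence completely positive because the target is commutative. It fixes $S$, because $\lambda_x(s)=s(x)$ almost everywhere. Finally, $\langle\Phi(f)\mathbf 1,\mathbf 1\rangle=\int\lambda_x(f)\,d\mu=\nu(f)$, so $\mu\ps\nu$.

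For (2)$\Rightarrow$(4), start from a UCP map $\Phi:C(X)\to L^\infty(X,\mu)$ witnessing $\mu\ps\nu$. Compose it with a Maharam lifting $\ell$ as in Subsection~\ref{SS:Lifting}. Then $\lambda_x(f):=\ell(\Phi(f))(x)$ is a positive unital functional for every $x$, that is, a probability measure $\lambda_x\in\operatorname{Prob}(X)$, and it depends measurably on $x$. For each $s$ in a countable dense subset of $S$ we have $\lambda_x(s)=s(x)$ almost everywhere. Intersecting these countably many conull sets and using density exactly as in the proof of Lemma~\ref{L:lifting} gives $\lambda_x(s)=s(x)$ for all $s\in S$ on a single conull set. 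Integrating then yields $\int\lambda_x(f)\,d\mu=\nu(f)$.

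For (4)$\Rightarrow$(1), fix $f=\max\{s_1,\dots,s_n\}$ with $s_i\in S_{\mathrm{sa}}$. For almost every $x$,
\[
f(x)=\max_i s_i(x)=\max_i\lambda_x(s_i)\le\lambda_x(f),
\]
and integrating gives $\mu(f)\le\nu(f)$. By linearity and positivity the inequality extends to positive combinations of such maxima, and by norm continuity to the closed cone $\mathcal C_S$.

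The main obstacle is (1)$\Rightarrow$(4), which needs a Cartier-type disintegration. Pass to $K=\mE(S)$ and the pushforwards $\iota_*\mu$ and $\iota_*\nu$. Here $\iota$ is a homeomorphism onto its compact image, since $S$ separates points. By Proposition~\ref{P:max-stable-cone}, (1) says precisely that $\iota_*\mu(F)\le\iota_*\nu(F)$ for every continuous convex $F$ on $K$, so $\iota_*\mu\preceq_{\mathrm C}\iota_*\nu$ in the classical sense. Theorem~\ref{T:Choquet}(3) then provides a measurable family $\lambda'_y\in\operatorname{Prob}_y(K)$ such that $\iota_*\nu=\int\lambda'_y\,d\iota_*\mu(y)$.

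The delicate point is that $\lambda'_y$ must be supported on $\iota(X)$ for almost every $y$, so that it can be pulled back to $X$. Since $\iota_*\nu$ is carried by the closed set $\iota(X)$, we get
\[
\int\lambda'_y\bigl(K\setminus\iota(X)\bigr)\,d\iota_*\mu(y)=\iota_*\nu\bigl(K\setminus\iota(X)\bigr)=0,
\]
so $\lambda'_y(K\setminus\iota(X))=0$ for almost every $y$. Where needed, this is applied to open sets approximating $K\setminus\iota(X)$ via Urysohn functions, to stay within the measurability guaranteed for continuous functions. Set $\lambda_x=(\iota^{-1})_*\lambda'_{\iota(x)}$, with an arbitrary measure on the exceptional null set. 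The barycenter condition $\lambda'_{\iota(x)}\in\operatorname{Prob}_{\iota(x)}(K)$, tested against the affine functions $\widehat s$, becomes $\lambda_x(s)=s(x)$. Measurability of $x\mapsto\lambda_x(f)$ follows by extending $f$ continuously to $K$ by Tietze's theorem, and this completes (4).
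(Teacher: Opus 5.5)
Your proof is correct and follows the paper's structure: (1)$\Rightarrow$(4) by pushing forward to $K=\mE(S)$, invoking Theorem~\ref{T:Choquet} and showing the disintegrating measures are carried by $\iota(X)$; (2)$\Leftrightarrow$(4) via the Maharam lifting; and (2)$\Leftrightarrow$(3) via Theorem~\ref{T:commutativeequivalence}. The one deviation is in (4)$\Rightarrow$(1), where you apply positivity of $\lambda_x$ directly to the generators $\max\{s_1,\dots,s_n\}$ of $\mathcal C_S$ rather than using Jensen's inequality on $K$ together with Proposition~\ref{P:max-stable-cone}, which is slightly more elementary; your countable-density argument and your monotone-convergence (Urysohn) remark also supply details the paper leaves implicit.
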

	
	\begin{proof}
		Let $K=\mathcal E(S)$ and $Y=\iota(X)\subset K$. Since the elements of $S$ separates the points of $X$, the evaluation map $\iota:X\to K$ is injective, and hence $\iota$ is a homeomorphism from $X$ onto the compact subset $Y$.
		
		Assume that $\mu \preceq_\mathrm{C} \nu$.  By Proposition~\ref{P:max-stable-cone}, this implies that $\iota_*\mu\preceq_\mathrm{C} \iota_*\nu$ in the usual Choquet order on the compact convex set $K$. Indeed, for every
		continuous convex $F\in C(K)$,
		\[
		\int_K F\,d(\iota_*\mu)
		=
		\int_X F\circ\iota\,d\mu
		\leq
		\int_X F\circ\iota\,d\nu
		=
		\int_K F\,d(\iota_*\nu).
		\]
		Then by Theorem \ref{T:Choquet} there exists an $\iota_*\mu$-measurable field of representing measures $k \mapsto \sigma_k\in \operatorname{Prob}(K)$ such that
		\[
		\sigma_k(\mathfrak a)= \mathfrak a(k), \qquad \mathfrak a \in A(K)
		\]
		for $\iota_*\mu$-almost every $k \in K$, and  
		\[
		\iota_*\nu(g)
		=
		\int_K \sigma_k(g)\,d(\iota_*\mu)(k),
		\qquad g\in C(K).
		\]
		Since $\iota_*\nu$ is supported on $Y$, we have
		\[
		1
		=
		\iota_*\nu(Y)
		=
		\int_K \sigma_k(Y)\,d(\iota_*\mu)(k).
		\]
		Thus $\sigma_k(Y)=1$ for $\iota_*\mu$-almost every $k$. Let $Z = \{k \in K: \sigma_k(Y)=1 \}$. Then $\iota _*\mu(Z \cap Y)=1$ and let $X_0= \iota^{-1}(Z \cap Y)$. Then $\mu(X_0)=1$ and for all $x \in X_0$ define $\lambda_x=(\iota^{-1})_*\sigma_{\iota(x)}$.  On the remaining null set, define $\lambda_x$ arbitrarily. The field
		$x\mapsto \lambda_x$ is $\mu$-measurable: it is obtained by composing the
		measurable field $k\mapsto\sigma_k$ with the Borel map
		$x\mapsto\iota(x)$, and then applying the Borel pushforward map induced by the
		homeomorphism $\iota^{-1}:Y\to X$.
		
		For $s\in S$ and for all $x \in X_0$, since $\sigma_{\iota(x)}$ represents $\iota(x)$, we get
		\[
		\lambda_x(s)
		=
		\int_Y s\circ\iota^{-1}\,d\sigma_{\iota(x)}
		=
		\int_K \widehat{s}\,d\sigma_{\iota(x)}
		=
		\widehat{s}(\iota(x))
		=
		s(x).
		\]
		Finally, if $f\in C(X)$, choose $g\in C(K)$ such that $g|_Y=f\circ\iota^{-1}$. Note that $\sigma_k(g)= \displaystyle \int_Ygd\sigma_k$ for all $k \in Z \cap Y$.
		
		Then, since both $\sigma_{\iota(x)}$ and $\iota_*\nu$ are supported on $Y$,

		\begin{align*}
			\int_X \lambda_x(f)\,d\mu(x) = \int_{X_0}\lambda_x(f)\,d\mu(x) &= \int_{X_0}(\iota^{-1})_*\sigma_{\iota(x)}(f)d\mu(x)\\
			&= \int_{X_0} \sigma_{\iota(x)}(f\circ \iota^{-1})d\mu(x)\\
			&= \int_{X_0}\sigma_{\iota(x)}(g)d\mu\\
			&= \int_{Z\cap Y}\sigma_k(g)d(\iota_*\mu)(k)\\
			&=\int_K \sigma_k(g)d(\iota_*\mu)(k)\\
			&= \iota_*\nu(g)=\nu(f).
		\end{align*}
		This proves that (1) implies (4). 
		
		Conversely, assume (4). Let $F\in C(K)$ be convex. For $\mu$-almost every $x$,
		the measure $\iota_*\lambda_x$ represents $\iota(x)$, since for every $s\in S$,
		\[
		\int_K \widehat{s}\,d(\iota_*\lambda_x)
		=
		\lambda_x(s)
		=
		s(x)
		=
		\widehat{s}(\iota(x)).
		\]
		Hence Jensen's inequality gives
		\[
		F(\iota(x))
		\leq
		\int_K F\,d(\iota_*\lambda_x)
		=
		\lambda_x(F\circ\iota).
		\]
		Integrating over $X$, we obtain
		\[
		\int_X F\circ\iota\,d\mu
		\leq
		\int_X \lambda_x(F\circ\iota)\,d\mu(x)
		=
		\nu(F\circ\iota).
		\]
		By Proposition~\ref{P:max-stable-cone}, this is precisely
		$\mu\preceq_{\mathrm C}\nu$. Therefore $(1)$ and $(4)$ are equivalent. \par 
		Next assume that (4) holds. Then define $\Phi(f)(x)=\lambda_x(f)$ for all $f \in C(X)$. Then $\Phi:C(X)\to L^\infty(X,\mu)$ is unital and positive, satisfies
		$\Phi(s)=s$ for $s\in S$, and
		\[
		\nu(f)=\int_X\Phi(f)\,d\mu= \langle \Phi(f)\mathbf 1_X,\mathbf 1_X \rangle ,
		\qquad f\in C(X).
		\]
		Since the GNS representation of $\mu$ is the multiplication representation of
		$C(X)$ on $L^2(X,\mu)$, with bicommutant $L^\infty(X,\mu)$, this $\Phi$ is a strong dilation witness for $\mu\ps\nu$. This implies that (4) implies (2). \par
		Next assume that $(2)$ holds, that is let $\mu \ps\nu$. Then there exists a UCP map $\Phi: C(X) \to L^{\infty}(X,\mu)= \pi_\mu(C(X))''$ such that $\Phi(s)= \pi_\mu(s)$ for all $s \in S$ and $\nu(f)= \langle \Phi(f)\mathbf 1_X, \mathbf 1_X \rangle$ for all $f \in C(X)$. For all $x \in X$, let $\psi_x: C(X) \longrightarrow \mathbb C$ given by $\psi_x(f)= \ell (\Phi(f))(x)$, where $\ell\colon L^\infty(X,\mu) \longrightarrow \mathscr L^\infty(X,\mathcal A_\mu)$ is the lifting map mentioned in Subsection \ref{SS:Lifting}. Thus $\psi_x$ is a positive linear functional for $\mu$-a.e. $x \in X$. Thus by Riesz-Markov-Kakutani theorem, for $\mu$-a.e. $x \in X$, there exists Borel regular probability measures $\lambda_x$ on $X$ such that $\psi_x(f)= \displaystyle \int_X fd\lambda_x$ for all $f \in C(X)$. Thus $x \mapsto \lambda_x$ is a $\mu$-measurable field of probability measures such that
		\[
		\lambda_x(s)= \ell (\Phi(s))(x)= \ell (\pi_\mu(s))(x)= s(x)
		\]
		for $\mu$-almost every $x \in X$. Finally for all $f \in C(X)$, we have 
		\[
		\nu(f)= \int_X\Phi(f)d\mu= \int_X\ell(\Phi(f))(x)d\mu(x)= \int_X\lambda_x(f)d\mu(x). 
		\]
		This proves that $(2)$ implies $(4)$ and hence $(2)$ and $(4)$ are equivalent. \par 
		The equivalence of (2) and (3) is due to Theorem \ref{T:commutativeequivalence}. This completes the proof. 
	\end{proof}

	\section*{Acknowledgments}
	
	Part of this work was completed during the author's doctoral studies at the University of Manitoba, with financial support from the university. The remaining work was completed at the University of Haifa and was supported by a NSF--BSF research grant held by Adam Dor-On. 
	
	The author is grateful to Rapha\"el Clou\^atre and Adam Dor-On for their valuable remarks and suggestions.

	\bibliographystyle{plain}
	\bibliography{bibliography}
	
\end{document}